%        File: FormalRepresentableRealizably.tex
%     Created: 木  4月 30 09:00 午前 2015 J
% Last Change: 木  4月 30 09:00 午前 2015 J
%
\documentclass[a4paper]{article}
\usepackage{amsmath, amsfonts}
\usepackage{amssymb, latexsym}
\usepackage{amsthm}
\usepackage{stmaryrd}

\usepackage[%dvipdfmx,
colorlinks=true,       % false: boxed links; true: colored links
linkcolor=blue,          % color of internal links (change box color with linkbordercolor)
citecolor=blue,        % color of links to bibliography
plainpages=false,      % do page number anchors as plain Arabic
pdfpagelabels,
]{hyperref}

% Definitions in the list style
\usepackage[shortlabels]{enumitem}
\setlist[enumerate]{leftmargin=*,align=left,labelindent=\parindent}

% math operators.
\newcommand{\Bin}{\left\{ 0,1 \right\}}

\DeclareMathOperator{\amp}{\,\&\,}
\DeclareMathOperator{\imp}{\,\rightarrow\,}
\DeclareMathOperator{\defeqiv}{\stackrel{\textup{def}}{\iff}}
\DeclareMathOperator{\defeql}{\stackrel{\textup{def}}{\  =\  }}
\newcommand{\dotminus}{\mathbin{\ooalign{\hss\raise.5ex\hbox{$\cdot$}\hss\crcr$-$}}}

\newcommand{\Nat}{\mathbb{N}}
\newcommand{\Rat}{\mathbb{Q}}
\newcommand{\Real}{\mathbb{R}}
\newcommand{\FSeq}{\Nat^{*}}
\newcommand{\BSeq}{\Bin^{*}}

\newcommand{\PBaire}{\Nat^{\Nat}}
\newcommand{\PCantor}{\Bin^{\Nat}}

\newcommand{\nil}{\langle\, \rangle}
\newcommand{\seq}[1]{\langle#1\rangle}

\newcommand{\lh}[1]{\lvert #1 \rvert}
\newcommand{\PiFT}{\textrm{\textup{$\Pi^{0}_{1}$-FAN}}}
\newcommand{\PiFTM}{\textrm{\textup{$\Pi^{0}_{1}$-FAN$_{\mathrm{M}}$}}}
\newcommand{\PiFTMStar}{\textrm{\textup{$\Pi^{0}_{1}$-FAN$_{\mathrm{M}}^{*}$}}}

\newcommand{\cFT}{\textrm{\textup{c-FAN}}}
\newcommand{\scFT}{\textrm{\textup{sc-FAN}}}
\newcommand{\scFTStar}{\textrm{\textup{sc-FAN}$^{*}$}}

\newcommand{\UCT}{{\textrm{\textup{UCT}}}}
\newcommand{\SC}{{\textrm{\textup{SC}}}}
\newcommand{\SCB}{\ensuremath{{\textrm{\textup{SC}}}_{\mathbf{B}}}}

\newcommand{\zero}{0^{\omega}}
 
\newcommand{\CZF}{\mathrm{CZF}}

% Theorem numbering
\newtheorem{theorem}{Theorem}[section]
\newtheorem{proposition}[theorem]{Proposition}
\newtheorem{lemma}[theorem]{Lemma}
\newtheorem{corollary}[theorem]{Corollary}
\theoremstyle{definition}
\newtheorem{definition}[theorem]{Definition}
\theoremstyle{remark}
\newtheorem{remark}[theorem]{Remark}

\numberwithin{equation}{section}

\title{A continuity principle equivalent to the monotone $\Pi^{0}_{1}$ fan
theorem}

\author{
  Tatsuji Kawai\\[0.5em]
%\normalsize Dipartimento di Matematica, Universit\`{a} di Padova\\
%\normalsize via Trieste 63, 35121 Padova, Italy\\
%\normalsize\texttt{tatsuji.kawai@math.unipd.it}}
\normalsize Japan Advanced Institute of Science and Technology\\
\normalsize 1-1 Asahidai, Nomi, Ishikawa 923-1292, Japan\\
\normalsize\texttt{tatsuji.kawai@jaist.ac.jp}}

\date{}

\begin{document}
\maketitle
\begin{abstract}
The strong continuity principle reads ``every pointwise continuous
function from a complete separable metric space to a metric space is
uniformly continuous near each compact image.'' We show that this
principle is equivalent to the fan theorem for monotone $\Pi^{0}_{1}$
bars. We work in the context of constructive reverse mathematics.
\bigskip

\noindent\textsl{Keywords:} Constructive reverse mathematics; Fan
theorem; Strongly continuous function; Complete separable metric space\\[3pt]
\noindent\textsl{MSC2010:} 03F60; 03F55; 03B30; 26E40
\end{abstract}

%%%%%%%%%%%%%%%%%%%%%%%%%%%%%%%%%%%%%%%%%%%%%%%%
\section{Introduction}\label{sec:Introduction}
%%%%%%%%%%%%%%%%%%%%%%%%%%%%%%%%%%%%%%%%%%%%%%%%
In Bishop's constructive mathematics \cite{Bishop-67}, the statement
``every pointwise continuous function on a compact metric space
is uniformly continuous'' implies the fan theorem for detachable bars
\cite{DienerLoebSeqRealValonUInt}, which is recursively
false~\cite[Chapter 4, Section 7.6]{ConstMathI}. This led Bishop to
adopt uniform continuity, or more generally uniform continuity on each
compact subset, as the fundamental notion of continuity in his theory
of metric spaces. Call this notion \emph{Bishop continuity}.

Bishop continuity works well for locally compact metric spaces
with which much of Bishop's theory of metric spaces is concerned.
When we go beyond the context of locally compact metric spaces, however, 
it does not behave well as a composition of two Bishop continuous
functions need not be Bishop continuous (see e.g.\
Schuster \cite[Section 3]{Schuster:what_is_continuity_constructively}). 
For this reason, Bridges~\cite{BridgesConstFunctAnalysis} introduced
the following notion of continuity for functions between
metric spaces.\footnote{The notion is attributed to Bishop \cite[page
60]{TechniqueConstAnal}.}
\begin{definition}\label{def:StrongContinuous}
A subset $L$ of a metric space $X$ is a \emph{compact image} if there
exist a compact metric space $K$ and a uniformly continuous function
$f \colon K \to X$ such that $L = f[K]$. A function $f \colon  X \to Y$
between metric spaces is \emph{strongly continuous}\footnote{The
  terminology is due to Troelstra and {van Dalen} \cite[Chapter
  7]{ConstMathII}.} if for each compact image $L \subseteq X$ and
  each $\varepsilon > 0$, there exists a $\delta > 0$ such that for all $x,u
  \in X$
\[
  x \in L \amp d_{X}(x,u) < \delta \imp d_{Y}(f(x),f(u)) <
  \varepsilon.
\]
\end{definition}

In this paper, we are interested in the strength of strong
continuity compared with that of pointwise continuity.
In particular, our aim is to capture the difference between strong
continuity and pointwise continuity of functions on a complete
separable metric space by a variant of fan theorem in the spirit of
constructive reverse mathematics \cite{ConstRevMatheCompactness,RevMatheInBISH}.
To this end, we introduce the following continuity principle,
called \emph{strong continuity principle}:
\begin{description}
  \item[\SC]
    Every pointwise continuous function from a complete separable
    metric space to a metric space is strongly continuous.
\end{description}
The main result of this paper is that $\SC$ is equivalent to
the fan theorem for monotone $\Pi^{0}_{1}$ bars ($\PiFTM$).
We also introduce a new variant of fan theorem, called $\scFT$, and
show that it is equivalent to $\PiFTM$. 
The principle $\scFT$ is quite similar to the principle $\cFT$ due to
Berger~\cite{BergerUCandcFT} but it is based on a slightly weaker
notion of c-set, and hence is stronger than $\cFT$.

The significance of our results are as follows. First, it gives a
logical characterisation of $\SC$ in terms of fan theorem. Second,
compared with the previous works on $\PiFTM$ in constructive reverse
mathematics \cite{BizarrePropertyPiFanM,DienerLoebSeqRealValonUInt},
$\SC$ is a more natural continuity principle for $\PiFTM$ as it
relates pointwise continuity with a constructively well-known notion
of continuity. Third, the equivalence of $\SC$ and $\PiFTM$
sheds
light on the relation between $\PiFTM$ the Uniform Continuity Theorem:
\begin{description}
  \item[\UCT]
    Every pointwise continuous function from a compact
    metric space to a metric space is uniformly continuous.
\end{description}
It has been known that $\PiFTM$ implies $\UCT$
\cite[Theorem 2]{DienerLoebSeqRealValonUInt}; however since $\SC$ looks
much stronger than $\UCT$, it is more likely that $\PiFTM$ is strictly
stronger than $\UCT$. Finally, the notion of c-set that is used to
define $\scFT$ seems to clarify the difference between $\PiFTM$ and
$\cFT$.

We work in Bishop-style informal constructive
mathematics \cite{Bishop-67}; however it should be straightforward
to formalise our result in Aczel's constructive set theory $\CZF$ with
Countable Choice \cite{Aczel-Rathjen-Note}.

\subsubsection*{Related works}\label{sec:RelatedWorks}
Connections between various forms of fan theorems and continuity
principles are extensively studied in constructive reverse
mathematics;
see Diener and Loeb \cite[Section 9]{DienerLoebSeqRealValonUInt} for
an overview.
In particular, several people established equivalence
between $\PiFTM$ and some forms of continuity principle:
the uniform locally constancy of every locally constant function from
$\Bin^{\Nat}$ to $\Real$ (Berger and Bridges
\cite{BizarrePropertyPiFanM}) and the uniform equicontinuity of every
equicontinuous sequence of functions from $\PCantor$ to $\Real$ (Diener
and Loeb~\cite{DienerLoebSeqRealValonUInt}).
Note that $\PiFTM$ should not be confused with
the similar principle $\PiFT$ mentioned in
\cite{BergerFANandUC,BergerSeparationFan,LubarskySepFan}, where the
latter does not require that bars be monotone.\footnote{In
\cite{BizarrePropertyPiFanM,DienerLoebSeqRealValonUInt}, the
principle $\PiFTM$ is called the fan theorem for $\Pi^{0}_{1}$ bars,
but its definition requires that bars be monotone.} As far as we know,
it is still open whether $\PiFTM$ and $\PiFT$ are equivalent.
%Our work is motived by the equivalence between the
%uniform continuity principle ($\UC$) and $\cFT$ due to Berger
%\cite{BergerFANandUC}. Note that $\UC$ is the statement:
%\begin{description}\label{eq:UC}
%  \item[\UC] Every pointwise continuous function
%    $f \colon \Bin^{\Nat} \to \Nat$ is uniformly continuous.
%\end{description}
%which is obtained from $\SC$ when we restrict the domain of a function
%in $\SC$ to Cantor space.

\subsubsection*{Notations}
We fix our notation.
The set of finite sequences of natural
numbers is denoted by $\FSeq$, and the set of finite binary sequences
is denoted by $\BSeq$. The letters $i,j,k,n,m,N,M$ range over $\Nat$,
and $a,b,c$ range over $\FSeq$.  Greek letters
$\alpha,\beta,\gamma,\delta$ range over the sequences $\PBaire$.
An element of $\FSeq$ of length $n$ is denoted by
$\seq{a_0,\dots,a_{n-1}}$, and the empty sequence is denoted by
$\nil$. The length of $a$ is denoted by $\lh{a}$, and the
concatenation of $a$ and $b$ is denoted by $a * b$.  The concatenation
of a finite sequence $a$ and a sequence $\alpha$ is denoted by
$a * \alpha$.  The initial segment of $\alpha$ of length $n$ is
denoted by $\overline{\alpha}n$.
We assume a fixed bijective coding of finite sequences where
the sequence coded by $n$ is denoted by $\hat{n}$.
We sometimes use $\lambda n.$
notation for a sequence. In particular, we put $\zero = \lambda n.
0$, the constant $0$ sequence.

%%%%%%%%%%%%%%%%%%%%%%%%%%%%%%%%%%%%%%%%%%%%%%%%%%%%%%%%%%%%%
\section{The fan theorem for  monotone
  \texorpdfstring{$\Pi^{0}_{1}$}{Pi01} bars}\label{sec:PiFTM}
%%%%%%%%%%%%%%%%%%%%%%%%%%%%%%%%%%%%%%%%%%%%%%%%%%%%%%%%%%%%%
We first recall some basic notions related to trees.
A \emph{tree} is an inhabited decidable subset of $\FSeq$
that is closed under initial segments.
A \emph{spread} is a tree $T$ such that
\[
   \forall a \in T  \exists n \, T(a * \seq{n}).
\]
Here, $T(a * \seq{n})$ means $a * \seq{n} \in T$.
Let $T$ be a spread.
A sequence $\alpha$ is a \emph{path} of $T$, written $\alpha
\in T$, if $\forall n \, T(\overline{\alpha}n)$.
The set of paths of $T$ is denoted by $K_{T}$, i.e.\
\[
  K_{T} \defeql \left\{ \alpha \in \PBaire \mid \alpha \in T \right\}.
\]
A subset $P$ of a spread $T$ is said to be
\begin{itemize}
  \item a \emph{bar} of $T$ if 
           $
           \forall \alpha \in T  \exists n \,
          P(\overline{\alpha}n);
           $
  \item a \emph{uniform bar} if there exists an
        $N$ such that
           $
           \forall \alpha \in T  \exists n \leq N \,
           P(\overline{\alpha}n);
           $
  \item \emph{$\Pi^{0}_{1}$} if there exists a sequence $\left(
    B_{n} \right)_{n \in \Nat}$ of decidable subsets $B_{n}$ of
    $T$ such that
      $
      P = \bigcap_{n \in \Nat} B_{n};
      $

  \item \emph{monotone} if $\forall a \in P \forall b\left[ a * b \in T
    \imp P(a*b) \right]$.
\end{itemize}
When we say that $P$ is a bar of $T$, we assume that $P$ is a subset of
$T$.  When we say that $P$ is a bar, this means that $P$ is a
bar of the \emph{universal spread} $\FSeq$.

A \emph{fan} is a spread $T$ such that
\[
   \forall a \in T \exists N
  \forall n \left[T(a * \seq{n}) \imp n \leq N \right].
\]
For each fan $T$, the principle $\PiFTM(T)$ reads:
\begin{description}
  \item[$\PiFTM(T)$] Every monotone $\Pi^{0}_{1}$  bar of $T$ is uniform.
\end{description}
\emph{The fan theorem for monotone $\Pi^{0}_{1}$ bars} is the principle
$\PiFTM(\BSeq)$, which is simply denoted by $\PiFTM$. 

We introduce an auxiliary fan theorem $\PiFTMStar(T)$ for each fan
$T$.  Given a fan $T$ and a bar $P$ (of the universal
spread $\FSeq$), we say that $P$ is \emph{uniform
with respect to $T$} if $P \cap T$ is a uniform bar of $T$.
Then, for each fan
$T$, the principle $\PiFTMStar(T)$ reads:
\begin{description}
  \item[$\PiFTMStar(T)$] Every monotone $\Pi^{0}_{1}$  bar is uniform
    with respect to $T$.
\end{description}

For the proof of the next proposition,
we recall the following construction (cf.\ Troelstra and van Dalen
\cite[Chapter 4, Lemma 1.4]{ConstMathI}).
Given a spread $T$, let $\Gamma_{T} \colon \FSeq \to T$ be a function
defined by
\begin{align}
  \begin{aligned}\label{eq:Gamma}
  \Gamma_{T}(\langle \rangle) &\defeql \langle \rangle,\\
  \Gamma_{T}(a * \langle n \rangle) &\defeql \Gamma_{T}(a) * \langle
  i(a,n)\rangle,
  \end{aligned}
\end{align}
where
\begin{align*}
  i(a,n) \defeql
  \begin{cases}
    n
    &
    \text{if $\Gamma_{T}(a) * \langle n \rangle \in T$},  \\
    \min \left\{ m \mid \Gamma_{T}(a) * \langle m \rangle \in T \right\}
    & \text{otherwise}.
  \end{cases}
\end{align*}
Then, we extend $\Gamma_T$ to $\Gamma_T^{*} \colon \PBaire \to K_{T}$ by
\begin{equation}\label{eq:GammaExt}
  \Gamma_{T}^{*}(\alpha)(n) \defeql \Gamma_{T}(\overline{\alpha}(n+1))(n).
\end{equation}
Note that $\Gamma_T^{*}(\alpha) = \alpha$
whenever $\alpha \in T$.

\begin{proposition}\label{prop:FanMEquivFanMStar}
  For each fan $T$, $\PiFTM(T)$ is equivalent to $\PiFTMStar(T)$.
\end{proposition}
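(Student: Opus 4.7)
My plan is to prove both implications; the forward direction is a routine restriction, while the reverse direction uses the retraction $\Gamma_T$ to lift a bar of $T$ to a bar of the universal spread.

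For $\PiFTM(T) \Rightarrow \PiFTMStar(T)$, given a monotone $\Pi^{0}_{1}$ bar $P$ of $\FSeq$, I would check that $P \cap T$ inherits the structure of a monotone $\Pi^{0}_{1}$ bar of $T$: every $\alpha \in K_{T}$ is barred by $P$ at some initial segment which automatically lies in $T$; monotonicity of $P$ on $\FSeq$ yields monotonicity of $P \cap T$ on $T$; and if $P = \bigcap_{n} B_{n}$ with $B_{n} \subseteq \FSeq$ decidable, then $P \cap T = \bigcap_{n} (B_{n} \cap T)$ with each $B_{n} \cap T$ a decidable subset of $T$. Applying $\PiFTM(T)$ to $P \cap T$ then delivers the required uniform bound.

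For the reverse direction, let $P$ be a monotone $\Pi^{0}_{1}$ bar of $T$. I would define
\[
  P' \defeql \left\{ a \in \FSeq \mid \Gamma_{T}(a) \in P \right\}
\]
and verify that $P'$ is a monotone $\Pi^{0}_{1}$ bar of $\FSeq$. The essential ingredient will be the identity $\Gamma_{T}(\overline{\alpha}n) = \overline{\Gamma_{T}^{*}(\alpha)}n$, which follows at once from (\ref{eq:GammaExt}) and the recursion defining $\Gamma_{T}$. Since $\Gamma_{T}^{*}(\alpha) \in K_{T}$ is barred by $P$ at some $n$, this identity gives $P'(\overline{\alpha}n)$. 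Monotonicity of $P'$ reduces to monotonicity of $P$ on $T$ via the fact that $\Gamma_{T}(a * b)$ always extends $\Gamma_{T}(a)$ inside $T$, and the $\Pi^{0}_{1}$ property transfers by pulling the decidable approximants of $P$ back along the total function $\Gamma_{T}$. Applying $\PiFTMStar(T)$ to $P'$ supplies a uniform bound $N$ for $P' \cap T$ on $T$, and since a short induction yields $\Gamma_{T}(a) = a$ for every $a \in T$, one has $P' \cap T = P$, so $P$ itself is uniform with bound $N$.

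The only real obstacle is the verification of the compatibility identity $\Gamma_{T}(\overline{\alpha}n) = \overline{\Gamma_{T}^{*}(\alpha)}n$; without it the lifting $P \mapsto P'$ need not produce a bar. Fortunately this identity is a direct consequence of how $\Gamma_{T}^{*}$ is defined componentwise from $\Gamma_{T}$ and requires no further assumption on $T$.
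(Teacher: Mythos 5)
Your proposal is correct and follows essentially the same route as the paper: restriction $P \cap T$ for the forward direction, and pulling the bar back along $\Gamma_T$ (the paper's $Q(a) \defeqiv P(\Gamma_T(a))$ is exactly your $P'$) for the converse, with the compatibility of $\Gamma_T$ and $\Gamma_T^{*}$ and the identity $\Gamma_T(a)=a$ on $T$ playing the same roles. No gaps; you have merely spelled out details the paper leaves implicit.
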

\begin{proof}
  Fix a fan $T$. First assume $\PiFTM(T)$. 
  Let $P \subseteq \FSeq$ be a monotone $\Pi^{0}_{1}$ bar.
  Then, $Q \defeql P \cap T$ is a monotone $\Pi^{0}_{1}$ bar of $T$,
  and hence it is uniform. Then 
  $P$ is uniform with respect to $T$.

  Conversely, assume $\PiFTMStar(T)$. Let $P$ be
  a monotone $\Pi^{0}_{1}$ bar of $T$.
  Define  $Q \subseteq \FSeq$ by
  \[
    Q(a) \defeqiv P(\Gamma_T(a)).
  \]
  Then, $Q$ is a monotone $\Pi^{0}_{1}$ subset of $\FSeq$, which is easily
  seen to be a bar by the definition of 
  $\Gamma_T^{*}$ in \eqref{eq:GammaExt}. Hence, $Q$ is uniform with
  respect to $T$. Then, $P$ is uniform.
\end{proof}

\begin{lemma}\label{lem:PiBFTimpPiFT}
  $\PiFTM$ implies $\PiFTM(T)$ for all fan $T$.
\end{lemma}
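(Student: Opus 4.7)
The plan is to reduce $\PiFTM(T)$ to $\PiFTM = \PiFTM(\BSeq)$ by encoding the fan $T$ as a binary fan and pulling back bars.

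First, construct a computable map $\pi\colon \BSeq \to T$ that simulates a path in $T$ while reading bits of a binary sequence. Since $T$ is a fan, each $a \in T$ has a finite, computable number of children $k(a) = \#\{n \mid T(a * \seq{n})\} \geq 1$; enumerate them in increasing order as $a * \seq{c_{0}(a)}, \dots, a * \seq{c_{k(a)-1}(a)}$, and set $\ell(a) = \max(1, \lceil \log_{2} k(a)\rceil)$, so that every binary block of length $\ell(a)$ encodes, via reduction modulo $k(a)$, some child of $a$. Then $\pi$ is defined by a state machine that reads the bits of $b$ in blocks of length $\ell(a)$ at the current $T$-node $a$ to choose the next child, leaving the $T$-node unchanged until a complete block has been read. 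By construction $\pi$ respects prefix ($b \prefix b' \imp \pi(b) \prefix \pi(b')$) and satisfies $\lh{\pi(b)} \leq \lh{b}$, because each $T$-step consumes at least one bit. Moreover, the induced map $\pi^{*}\colon \PCantor \to K_{T}$ is surjective: each $\gamma \in K_{T}$ is the image of the binary sequence obtained by concatenating the $\ell(\overline{\gamma}i)$-bit binary encodings of the child indices along $\gamma$.

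Next, given a monotone $\Pi^{0}_{1}$ bar $P$ of $T$, define $P^{*} \subseteq \BSeq$ by $P^{*}(b) \defeqiv P(\pi(b))$ and verify that $P^{*}$ is $\Pi^{0}_{1}$, as the pullback of $P$ along a computable map; monotone in $\BSeq$, by the prefix-preserving property of $\pi$ and the monotonicity of $P$ in $T$; and a bar of $\BSeq$, since $\pi^{*}(\alpha) \in K_{T}$ for each $\alpha \in \PCantor$, so that a $P$-prefix of $\pi^{*}(\alpha)$ lifts via monotonicity to a $P^{*}$-prefix of $\alpha$. Applying $\PiFTM$ to $P^{*}$ yields a uniform bound $N$. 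For any $\gamma \in K_{T}$, picking $\alpha \in \PCantor$ with $\pi^{*}(\alpha) = \gamma$ and using $\lh{\pi(\overline{\alpha}n)} \leq n$ together with $\pi(\overline{\alpha}n) \prefix \gamma$ produces a prefix of $\gamma$ in $P$ of length at most $N$, giving uniformity of $P$ with the same bound.

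The main obstacle is the clean setup of $\pi$: the degenerate case $k(a) = 1$ (which forces taking $\ell(a) \geq 1$ rather than $0$, so that each bit really does make progress in $T$) and the wrap-around when $2^{\ell(a)} > k(a)$ both need some bookkeeping to keep $\pi$ total on $\BSeq$, computable, and landing in $T$. Once these details are in place, both the verification that $P^{*}$ is a monotone $\Pi^{0}_{1}$ bar of $\BSeq$ and the transfer of the uniform bound from $P^{*}$ back to $P$ are routine.
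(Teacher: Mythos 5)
Your proof is correct, but it takes a genuinely different route from the paper. The paper argues in two stages: first it proves $\PiFTM \Rightarrow \PiFTM(T)$ for binary subfans $T \subseteq \BSeq$ via the retraction $\Delta_T$ (the standard Troelstra--van Dalen correction map), and then it reduces an arbitrary fan $T$ to a binary subfan $T'$ by the unary-style embedding $\Phi(a*\seq{i}) = \Phi(a)*\seq{0}*\seq{1,\dots,1}$ with inverse $\Psi \colon T' \to T$, pulling the bar $P$ back to $P' = P \circ \Psi$ and then converting the uniform bound $N$ on $T'$ into a bound $M = \max\{\lh{\Psi(a)} \mid a \in T',\ \lh{a} = N\}$ on $T$. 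You instead build a single surjective block-coding map $\pi \colon \BSeq \to T$ (with wrap-around modulo the branching number $k(a)$, so no correction/retraction step is needed), pull $P$ back along $\pi$, and transfer the bound directly: since $\ell(a)\geq 1$ forces $\lh{\pi(b)} \leq \lh{b}$, the same $N$ works for $P$ on $T$, whereas the paper must recompute $M$ from $N$. The trade-off is that your coding needs the branching data $k(a)$ and the child enumeration of the fan up front (harmless here, since the least branching bound is uniquely determined and computable from any bound, and the paper itself uses such level-wise maxima in its modulus $\phi$ and in $M$), while the paper's unary coding is definable for any spread node without that data; your verification steps (that $P\circ\pi$ is a monotone $\Pi^0_1$ bar of $\BSeq$, using monotonicity of $P$ both for the bar property and for the pullback, and that surjectivity of $\pi^*$ gives the bound transfer) are all sound and, as you say, routine once the bookkeeping for $\pi$ is fixed.
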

\begin{proof}
The proof is essentially the same as the one given by Troelstra and {van Dalen} \cite[Chapter 4, Section
7.5]{ConstMathI} for a similar fact about fan theorem. 

  First, suppose that $T$ is a subfan of $\BSeq$.
  Let $\Delta_{T} \colon \BSeq \to T$ be the function that is similarly
  defined as $\Gamma_{T}$ (cf.\ \eqref{eq:Gamma}), and let
  $\Delta^{*}_{T} \colon \PCantor \to K_{T}$ be the extension of
  $\Delta_{T}$ that
  is similarly defined as $\Gamma_{T}^{*}$ (cf.\ \eqref{eq:GammaExt}).
  Then, it is straightforward to show that $\PiFTM$ implies $\PiFTM(T)$.

  Next, we consider the case where $T$ is not necessarily a
  subfan of $\BSeq$. In this case, we can embed $T$ into  $\BSeq$ by a function
  $\Phi$ given by
  \begin{align*}
    \Phi(\langle \rangle) &\defeql \langle \rangle,\\
    \Phi(a * \langle i \rangle) &\defeql \Phi(a) *
    \langle 0 \rangle * \langle \underbrace{1,\dots,1}_{i + 1}
    \rangle.
  \end{align*} 
  Let $T'$ be a subfan of $\BSeq$ obtained by closing the image of
  $\Phi$ under initial segments. Then, the function $\Phi$ is naturally extended
  to a uniformly continuous function $\Phi^{*} \colon K_{T} \to K_{T'}$ as
  we did for $\Gamma_T$ (cf.\ \eqref{eq:GammaExt}).
  Let $\Psi^{*} \colon K_{T'} \to K_{T}$ be the uniformly continuous
  inverse of $\Phi$ that is defined as the extension of a function  $\Psi \colon
  T' \to T$ defined by
  \begin{align*}
    \Psi(\langle \rangle) = \Psi(\langle 0 \rangle)
    &\defeql \langle \rangle,\\
    \Psi(a * \langle 0 \rangle * \langle \underbrace{1,\dots,1}_{n + 1} \rangle)
    &\defeql \Psi(a * \langle 0 \rangle), \\
    \Psi(a * \langle 0 \rangle * \langle \underbrace{1,\dots,1}_{n + 1} \rangle
    * \langle 0 \rangle)
    &\defeql \Psi(a * \langle 0 \rangle) * \langle n \rangle.
  \end{align*} 
  The modulus of $\Psi^{*} \colon K_{T'} \to K_{T}$
  is provided by a function $\phi \colon \Nat \to \Nat$ defined by 
  \[
    \phi(n) \defeql \left(\sum_{i = 0}^{n} \max\Bigl\{ a(i-1) \mid a
    \in T \amp \lh{a} = i \Bigr\} \right) + 2n + 1.
  \]
  Indeed, for each $\alpha, \beta \in K_{T'}$ and $n \in \Nat$, we have
  \[
    \overline{\alpha}\phi(n) = \overline{\beta}\phi(n)
    \imp
    \overline{\Psi^{*}(\alpha)}n = \overline{\Psi(\overline{\alpha}\phi(n))}n
    = \overline{\Psi(\overline{\beta}\phi(n))}n =
    \overline{\Psi^{*}(\beta)}n.
  \]
  Now, assume $\PiFTM(T')$, and let $P$ be a
  monotone $\Pi^{0}_{1}$ bar of $T$. Then
    $
    \forall \alpha \in T' \exists n \, P(\overline{\Psi^{*}(\alpha)}n),
    $
    so that 
    $
    \forall \alpha \in T' \exists m \, P(\Psi(\overline{\alpha}m)).
    $
  Thus, a subset $P' \subseteq T'$ defined by
  \[
    P'(a) \defeql P(\Psi(a))
  \]
  is a monotone $\Pi^{0}_{1}$ bar of $T'$.  By $\PiFTM(T')$, there
  exists an $N$ such that
  $\forall \alpha \in T' \, P(\Psi(\overline{\alpha}N))$.
  Define $M$ by 
  \[
    M \defeql \max \left\{ |\Psi(a)| \mid a \in T' \amp |a| = N \right\}.
  \]
  Then for each $\alpha \in T$, we have
  $P(\Psi(\overline{\Phi^{*}(\alpha)}N))$. Since $\Psi(\overline{\Phi^{*}(\alpha)}N)$ 
  is an initial segment of $\alpha$ and
  $|\Psi(\overline{\Phi(\alpha)}N)| \leq M$, we have
  $P(\overline{\alpha}M)$. Hence $P$ is a uniform bar of $T$.
  Therefore $\PiFTM(T')$ implies  $\PiFTM(T)$.
  
  Combining the two arguments, we see that $\PiFTM$ implies $\PiFTM(T)$.
\end{proof}

\begin{corollary}\label{cor:PiFanSummary}
 The following are equivalent.
 \begin{enumerate}
   \item $\PiFTM$.
   \item $\PiFTM(T)$ for all fan $T$.
   \item $\PiFTMStar(T)$ for all fan $T$.
 \end{enumerate}
\end{corollary}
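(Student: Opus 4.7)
The corollary is essentially a bookkeeping consequence of Proposition~\ref{prop:FanMEquivFanMStar} and Lemma~\ref{lem:PiBFTimpPiFT}, so my plan is simply to chain these two results into a cycle of implications. I would set out to prove $(1) \Rightarrow (2) \Rightarrow (3) \Rightarrow (1)$.

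For $(1) \Rightarrow (2)$, I would just cite Lemma~\ref{lem:PiBFTimpPiFT}, which already gives $\PiFTM \Rightarrow \PiFTM(T)$ for every fan $T$. For $(2) \Rightarrow (3)$, fix an arbitrary fan $T$; by hypothesis $\PiFTM(T)$ holds, and by Proposition~\ref{prop:FanMEquivFanMStar} this is equivalent to $\PiFTMStar(T)$, so $(3)$ follows. For $(3) \Rightarrow (1)$, specialise to $T = \BSeq$ to obtain $\PiFTMStar(\BSeq)$, and apply Proposition~\ref{prop:FanMEquivFanMStar} once more (in the other direction) to conclude $\PiFTM(\BSeq)$, which is by definition $\PiFTM$.

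There is no genuine obstacle here: the only subtle point worth spelling out is that in the statement of $(2) \Rightarrow (1)$ and $(3) \Rightarrow (1)$ we are using the definitional identity $\PiFTM = \PiFTM(\BSeq)$, and that $\BSeq$ is indeed a fan so that items $(2)$ and $(3)$ apply to it. Given that, the whole proof is a two-line appeal to the previous results.
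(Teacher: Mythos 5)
Your proposal is correct and is exactly the intended argument: the paper gives no explicit proof precisely because the corollary follows by chaining Lemma~\ref{lem:PiBFTimpPiFT} for $(1)\Rightarrow(2)$, Proposition~\ref{prop:FanMEquivFanMStar} for $(2)\Rightarrow(3)$, and specialisation to $T=\BSeq$ (a fan) together with the same proposition for $(3)\Rightarrow(1)$.
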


%%%%%%%%%%%%%%%%%%%%%%%%%%%%%%%%%%%%%%%%%%%%%%%%%%%%%%%%%%%%%%%%%%%%%%%%
\section{The principle \texorpdfstring{$\scFT$}{sc-FAN}}\label{sec:scFT}
%%%%%%%%%%%%%%%%%%%%%%%%%%%%%%%%%%%%%%%%%%%%%%%%%%%%%%%%%%%%%%%%%%%%%%%%
We introduce another fan theorem $\scFT$ based on the notion of c-set.

Let $T$ be a spread. A subset $P \subseteq T$ is a \emph{c-set} if there exists a
decidable subset $D \subseteq \FSeq$ such that
\begin{equation}\label{def:cset}
  P(a) \leftrightarrow \forall b \left(  a * b \in D\right)
\end{equation}
for all $a \in T$.
Note that every c-set is \emph{monotone}.
A \emph{c-bar} of $T$ is a c-set that is a bar of $T$.

For each fan $T$, the principle $\scFT(T)$ reads:
\begin{description}
 \item[$\scFT(T)$] Every c-bar of $T$ is uniform.
\end{description}
\emph{The strong continuous fan theorem} is the principle
$\PiFTM(\BSeq)$, which is simply denoted by $\scFT$. 
\begin{remark}
The principle $\scFT$ should not be confused with $\cFT$
introduced in \cite{BergerUCandcFT}, since we have slightly weaken
the notion of c-set from the one that is used to define $\cFT$.
In \cite{BergerUCandcFT}, a c-set is defined as in \eqref{def:cset}
except that the quantification $\forall b$ is restricted to
$T$.\footnote{In \cite{BergerUCandcFT}, the fan $T$ is fixed to the
binary tree.}
\end{remark}

%
% 21/05/2018 
% The following lemma was proposed in the revision. But it is wrong.
%
%\begin{lemma}
%  \label{lem:PiMBarContainsCBar}
%  Let $T$ be a spread, and $P \subseteq T$ be a monotone $\Pi_{0}^{1}$ 
%  subset of $T$. Then, there exists a c-set $Q \subseteq T$ such that
%  \[
%     P(a) \leftrightarrow \forall n Q(a * \seq{n})
%  \]
%  for all $a \in T$.
%\end{lemma}
%\begin{proof}
%  Let $\left( B_{n} \right)_{n \in \Nat}$ be a sequence of decidable subsets of
%  $T$ such that $P = \bigcap_{n \in \Nat} B_{n}$. Define $D \subseteq \FSeq$
%  inductively
%  by
%  \begin{align*}
%    \begin{cases}
%    \nil \in D,&\\
%    a * \seq{n} \in D & \text{if $a \in B_{n}$}.
%    \end{cases}
%  \end{align*}
%  Define a c-set $Q \subseteq T$ by
%  \[
%    Q(a) \defeqiv \forall b\, D(a * b).
%  \]
%  Then, for each $a \in T$, we have
%  \begin{align*}
%    a \in P  
%    &\leftrightarrow
%    \forall b \forall n \left(  a * b \in B_{n}\right) &&
%    \text{(by monotonicity of $P$)}\\
%    &\leftrightarrow
%    \forall b \forall n \left( a * b * \seq{n} \in D \right) &&
%    \text{(by the definition of $D$)}\\
%    &\leftrightarrow
%    \forall n \forall b \left(a * \seq{n} * b \in D \right) &&\\
%    &\leftrightarrow
%    \forall n \left(a * \seq{n} \in Q \right).
%    &&
%    \qedhere
%  \end{align*}
%\end{proof}

%
% 21/05/2018 
% Corrected version the of the above lemma.
%
\begin{lemma}
  \label{lem:PiMBarContainsCBar}
  Let $T$ be a spread and $P \subseteq T$ be a monotone $\Pi_{0}^{1}$ 
  bar of $T$. Then there exists a c-bar $Q$ of $T$ such that
  $Q \subseteq P$.
\end{lemma}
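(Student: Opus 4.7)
The plan is to exhibit a decidable $D \subseteq \FSeq$ so that $Q(a) \defeqiv \forall b \, (a * b \in D)$ defines a c-set contained in $P$, and then to verify that $Q$ is still a bar of $T$. Writing $P = \bigcap_n B_n$ with $B_n$ decidable subsets of $T$, the key trick is to encode a witness index $n$ (certifying $a \notin B_n$) as the \emph{last symbol} of a one-step extension $b = \seq{n}$. Concretely, I would define the ``bad'' set
\[
  E \defeql \{ c \in \FSeq \mid \lh{c} \geq 1 \amp \overline{c}(\lh{c}-1) \in T \amp \overline{c}(\lh{c}-1) \notin B_{c(\lh{c}-1)} \},
\]
set $D \defeql \FSeq \setminus E$, and take $Q(a) \defeqiv \forall b \, (a * b \in D)$ for $a \in T$; then $D$ is decidable (since $T$ and each $B_n$ are), and $Q$ is a c-set by construction.

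For $Q \subseteq P$: given $a \in T$ with $Q(a)$, I would instantiate $b = \seq{n}$ for each $n$; the condition $a * \seq{n} \in D$ unpacks directly, using $a \in T$ and $\lh{a*\seq{n}} \geq 1$, to $a \in B_n$, so $a \in P$. For the bar property of $Q$: given $\alpha \in T$, I would use the bar property of $P$ to get some $n_0$ with $\overline{\alpha}n_0 \in P$, and then claim $Q(\overline{\alpha}(n_0+1))$. The case $b = \nil$ reduces to $\overline{\alpha}n_0 \in B_{\alpha(n_0)}$, which holds since $\overline{\alpha}n_0 \in P$. For $b \neq \nil$, consider the predecessor $c' \defeql \overline{\alpha}(n_0+1) * \overline{b}(\lh{b}-1)$; either $c' \notin T$ (and $E$ fails by its $\in T$ clause), or monotonicity of $P$ applied to $\overline{\alpha}(n_0+1) \in P$ gives $c' \in P$, hence $c' \in B_{b(\lh{b}-1)}$.

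I expect the main obstacle to be identifying the right encoding of the $\Pi^0_1$ quantifier in terms of the c-set quantifier $\forall b$. The natural first attempt, $D(c) \defeqiv c \in T \imp c \in B_{\lh{c}}$ (using the length of $c$ as the index), fails: it would require deducing $a \in B_n$ from conditions on descendants of $a$, whereas the approximants $B_n$ need not propagate information downward in $T$. Placing the index in the last symbol of a one-step extension converts the would-be $\forall n \forall b$ into a single $\forall b$ with no downward information transfer, and monotonicity of $P$ handles the longer extensions in the bar argument.
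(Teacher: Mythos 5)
Your proof is correct and takes essentially the same route as the paper: the index of the $\Pi^0_1$ approximant is encoded as the last symbol of a one-step extension, and monotonicity of $P$ disposes of longer extensions in the bar argument via the $n_0+1$ shift. The only (cosmetic) difference is that the paper routes off-tree extensions through the retraction $\Gamma_T \colon \FSeq \to T$ defined in \eqref{eq:Gamma}, whereas you make $D$ vacuously true once the predecessor leaves $T$; both devices serve exactly the same purpose.
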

\begin{proof}
  Let $\left( B_{n} \right)_{n \in \Nat}$ be a sequence of decidable subsets of
  $T$ such that $P = \bigcap_{n \in \Nat} B_{n}$. Define $D \subseteq \FSeq$
  inductively
  by
  \begin{align*}
    \begin{cases}
    \nil \in D,&\\
    a * \seq{n} \in D & \text{if $B_{n}(\Gamma_T(a))$},
    \end{cases}
  \end{align*}
  where $\Gamma_T \colon \FSeq \to T$ is given in \eqref{eq:Gamma}.
  Define a c-set $Q \subseteq T$ by
  \[
    Q(a) \defeqiv \forall b\, D(a * b).
  \]
  Since $P(a) \leftrightarrow \forall n D(a * \seq{n})$ for each $a
  \in T$, we have $Q \subseteq P$. To see that $Q$ is bar of $T$, let $\alpha
  \in T$.  Since $P$ is a bar, there exists an $n$ such that
  $P(\overline{\alpha}n)$. Sicne $P$ is monotone, we also have
  $P(\overline{\alpha}(n+1))$. We claim that
  $Q(\overline{\alpha}(n+1))$, i.e. $\forall b
  D(\overline{\alpha}(n+1) * b)$, which is proved by induction on the
  length of $b$. If $\lh{b} = 0$, then $D(\overline{\alpha}(n+1))
  \leftrightarrow B_{\alpha_{n}}(\overline{\alpha}n)$, and the right
  hand side follows from the fact that $P(\overline{\alpha}n)$.  The
  inductive case follows from the monotonicity of $P$.
\end{proof}

\begin{proposition}\label{prop:PiFTMscFT}
  For each fan $T$, $\PiFTM(T)$ is equivalent to  $\scFT(T)$.
\end{proposition}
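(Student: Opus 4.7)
The plan is to establish the equivalence by two short implications, each of which is essentially routine given the preceding material.

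For the forward direction $\PiFTM(T) \Rightarrow \scFT(T)$, I would show that every c-set is a monotone $\Pi^{0}_{1}$ subset of $T$, so that every c-bar of $T$ is a monotone $\Pi^{0}_{1}$ bar of $T$ and hence, by $\PiFTM(T)$, uniform. Fix a c-set $P \subseteq T$ with witness $D \subseteq \FSeq$. Monotonicity is immediate: if $P(a)$ and $a * b \in T$, then for every $c$ the sequence $a * b * c$ belongs to $D$ by instantiating the defining quantifier of $P(a)$ with $b * c$, and hence $P(a*b)$. For the $\Pi^{0}_{1}$ property, use the fixed bijective coding $n \mapsto \hat{n}$ of finite sequences and set $B_{n} \defeql \{ a \in T \mid a * \hat{n} \in D \}$; each $B_{n}$ is decidable since $D$ is, and $P = \bigcap_{n \in \Nat} B_{n}$ by construction.

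For the converse $\scFT(T) \Rightarrow \PiFTM(T)$, I would invoke Lemma~\ref{lem:PiMBarContainsCBar}: given any monotone $\Pi^{0}_{1}$ bar $P$ of $T$, that lemma produces a c-bar $Q \subseteq P$ of $T$. By $\scFT(T)$, the c-bar $Q$ is uniform, i.e.\ there exists an $N$ such that $\forall \alpha \in T \exists n \leq N \, Q(\overline{\alpha}n)$. Since $Q \subseteq P$, the same $N$ witnesses uniformity of $P$.

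Neither direction presents a real obstacle: the substantive technical content has already been absorbed into Lemma~\ref{lem:PiMBarContainsCBar}, whose inductive construction of $D$ via $\Gamma_{T}$ is what actually extracts the c-bar from a monotone $\Pi^{0}_{1}$ bar. For Proposition~\ref{prop:PiFTMscFT} itself, the only verification of any interest is the observation that c-sets are monotone $\Pi^{0}_{1}$, for which the bijective coding of $\FSeq$ plays the expected role.
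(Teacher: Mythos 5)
Your proof is correct and follows essentially the same route as the paper: the forward direction identifies a c-set with the monotone $\Pi^{0}_{1}$ set $\bigcap_{n} B_{n}$ where $B_{n}(a) \defeqiv D(a * \hat{n})$ (the paper had already noted that c-sets are monotone, which you verify explicitly), and the converse applies Lemma~\ref{lem:PiMBarContainsCBar} and the fact that a uniform subset of a bar makes the bar uniform. No gaps.
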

\begin{proof}
Fix a fan $T$. First assume $\PiFTM(T)$.
Let $Q \subseteq T$ be a c-set, and $D \subseteq \FSeq$ be a decidable
subset such that $Q(a) \leftrightarrow \forall b D(a*b)$ for all
$a \in T$. For each $n$, define $B_{n} \subseteq T$ by 
\[
  B_{n}(a) \defeqiv D(a * \hat{n}).
\]
Then $Q \defeql \bigcap_{n \in \Nat} B_{n}$. By
$\PiFTM(T)$, we conclude that $Q$ is a uniform.

Conversely, assume $\scFT(T)$, and let $P$ be a
monotone $\Pi^{0}_{1}$ bar of $T$. By Lemma
\ref{lem:PiMBarContainsCBar}, there exists a c-bar of $T$
that is contained in $P$.
By $\scFT(T)$,  we conclude that $P$ is uniform.
\end{proof}
We introduce an auxiliary fan theorem $\scFTStar(T)$ for each fan
$T$, which reads:
\begin{description}
  \item[$\scFTStar(T)$] 
    Every c-bar of $\FSeq$ is uniform with respect to $T$.
\end{description}
It is easy to see that Proposition \ref{prop:PiFTMscFT} holds for
the starred variants as well.
\begin{proposition}\label{prop:PiFTMStarscFTStar}
  For each fan $T$, $\PiFTMStar(T)$ is equivalent to  $\scFTStar(T)$.
\end{proposition}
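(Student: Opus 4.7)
The plan is to mirror the two directions of Proposition \ref{prop:PiFTMscFT}, replacing the fan $T$ that played the role of the ambient space there with the universal spread $\FSeq$. The key observation is that $\FSeq$ is itself a spread, so Lemma \ref{lem:PiMBarContainsCBar} applies with $T := \FSeq$ and yields a c-bar of $\FSeq$ contained in any given monotone $\Pi^{0}_{1}$ bar of $\FSeq$; in this case the auxiliary function $\Gamma_{\FSeq}$ reduces to the identity on $\FSeq$, so no further modification is required.

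For $\PiFTMStar(T) \Rightarrow \scFTStar(T)$, I would start from a c-bar $Q$ of $\FSeq$ witnessed by a decidable $D \subseteq \FSeq$ with $Q(a) \leftrightarrow \forall b\, D(a*b)$, and set $B_{n}(a) \defeqiv D(a * \hat{n})$ using the fixed coding of finite sequences. Then each $B_{n}$ is decidable and $Q = \bigcap_{n \in \Nat} B_{n}$, so $Q$ is a monotone $\Pi^{0}_{1}$ bar of $\FSeq$, whence $\PiFTMStar(T)$ gives at once that $Q$ is uniform with respect to $T$.

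For $\scFTStar(T) \Rightarrow \PiFTMStar(T)$, given a monotone $\Pi^{0}_{1}$ bar $P$ of $\FSeq$, I would invoke Lemma \ref{lem:PiMBarContainsCBar} with the spread $\FSeq$ in place of $T$ to extract a c-bar $Q$ of $\FSeq$ with $Q \subseteq P$. By $\scFTStar(T)$, $Q$ is uniform with respect to $T$, and since $Q \cap T \subseteq P \cap T$, so is $P$.

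I do not expect any substantial obstacle: the argument runs in complete parallel with that of Proposition \ref{prop:PiFTMscFT}, the only novelty being the observation that Lemma \ref{lem:PiMBarContainsCBar}, which already accommodates an arbitrary spread in its hypothesis, may be applied to the universal spread $\FSeq$.
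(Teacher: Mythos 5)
Your proposal is correct and is essentially the paper's intended argument: the paper omits an explicit proof, remarking only that Proposition \ref{prop:PiFTMscFT} ``holds for the starred variants as well,'' and your adaptation---using $B_{n}(a)\defeqiv D(a*\hat{n})$ in one direction and Lemma \ref{lem:PiMBarContainsCBar} applied to the universal spread $\FSeq$ (where $\Gamma_{\FSeq}$ is indeed the identity) in the other, then passing uniformity from $Q\cap T$ to $P\cap T$---is exactly that routine adaptation.
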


By Proposition \ref{prop:PiFTMscFT}, Proposition
\ref{prop:PiFTMStarscFTStar}, and Corollary
\ref{cor:PiFanSummary}, we have the following.
\begin{proposition}\label{prop:PiFMEquivscFT}
  The following are equivalent:
  \begin{enumerate}
    \item  $\PiFTM$.
    \item  $\scFT$.
    \item  $\scFT(T)$ for all fan $T$.
    \item  $\scFTStar(T)$ for all fan $T$.
  \end{enumerate}
\end{proposition}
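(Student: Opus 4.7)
The statement is a four-way equivalence, and the plan is simply to chain the three results cited in the hint, since all of the substantive work has already been carried out in the previous lemmas and propositions. Corollary \ref{cor:PiFanSummary} already tells us that (1) is equivalent to ``$\PiFTM(T)$ for every fan $T$'' and to ``$\PiFTMStar(T)$ for every fan $T$''. Propositions \ref{prop:PiFTMscFT} and \ref{prop:PiFTMStarscFTStar} convert these universally quantified statements, fan by fan, into (3) and (4) respectively.

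Concretely, I would first deduce (1) $\Leftrightarrow$ (3): by Corollary \ref{cor:PiFanSummary}, (1) is equivalent to ``$\PiFTM(T)$ for every fan $T$''; by Proposition \ref{prop:PiFTMscFT} applied at each fan $T$, this is in turn equivalent to ``$\scFT(T)$ for every fan $T$'', which is (3). Then (1) $\Leftrightarrow$ (4) follows by the analogous argument, replacing Proposition \ref{prop:PiFTMscFT} with Proposition \ref{prop:PiFTMStarscFTStar}. Finally, (1) $\Leftrightarrow$ (2) is just the specialisation of Proposition \ref{prop:PiFTMscFT} to the binary fan $T = \BSeq$, since by definition $\PiFTM = \PiFTM(\BSeq)$ and $\scFT = \scFT(\BSeq)$.

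There is no real obstacle here. The only subtlety worth pointing out is the direction (2) $\Rightarrow$ (3), in which the apparently weaker statement about the binary fan alone is used to derive a statement quantifying over all fans; this goes through because (2) implies (1) via Proposition \ref{prop:PiFTMscFT} at $\BSeq$, and then (1) implies (3) as above, with the essential work (embedding an arbitrary fan into $\BSeq$) having already been done inside Lemma \ref{lem:PiBFTimpPiFT}.
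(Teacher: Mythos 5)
Your proposal is correct and follows exactly the paper's route: the paper likewise obtains the four-way equivalence by combining Corollary \ref{cor:PiFanSummary} with Propositions \ref{prop:PiFTMscFT} and \ref{prop:PiFTMStarscFTStar}, the case $T=\BSeq$ giving (1) $\Leftrightarrow$ (2). Nothing further is needed.
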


%%%%%%%%%%%%%%%%%%%%%%%%%%%%%%%%%%%%%%%%%%%%%%%%%%%%%
\section{Strong continuity principle}
\label{sec:StrongContinuityPrinciple}
%%%%%%%%%%%%%%%%%%%%%%%%%%%%%%%%%%%%%%%%%%%%%%%%%%%%%
\subsection{Strongly continuous functions}
\label{sec:StrongContinuity}
We give some convenient characterisations
of strongly continuous functions on complete metric spaces (cf.\
Definition \ref{def:StrongContinuous}).

Let $f \colon  X \to Y$ be a function between metric spaces,
and let $K \subseteq X$ be a compact subset. We say that $f$ is 
\emph{uniformly continuous near $K$} if 
for each $\varepsilon > 0$, there exists a $\delta > 0$ such that
for all $x,u \in X$
\[
  x \in K \amp d_{X}(x,u) < \delta \imp d_{Y}(f(x),f(u)) <
  \varepsilon.
\]
Since a closed subset of a complete metric space is complete,
a strongly continuous function on a complete metric space 
admits a simple characterisation.
%see Troelstra and {van Dalen} \cite[Chapter 7, Section 4.8]{ConstMathI}.
\begin{lemma}
A function from a complete metric space to a
metric space is strongly continuous if and only if 
it is uniformly continuous near each
compact subset of its domain.
\end{lemma}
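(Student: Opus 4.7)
The plan is to split the biconditional and observe that the two notions differ only superficially when the domain $X$ is complete. The forward direction is essentially tautological: given a compact subset $K \subseteq X$, the inclusion $\iota \colon K \hookrightarrow X$ is (isometric, hence) uniformly continuous, so $K = \iota[K]$ is itself a compact image in the sense of Definition~\ref{def:StrongContinuous}; applying strong continuity to this particular compact image immediately yields uniform continuity near $K$.

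For the converse I would argue that, in a complete space, every compact image sits inside a genuine compact subset. Given a compact image $L = g[K]$ with $K$ compact and $g \colon K \to X$ uniformly continuous, I would first transport a finite $\eta$-net of $K$ through $g$ to obtain a finite $\varepsilon$-net of $L$, where $\eta$ is a modulus of uniform continuity of $g$ at level $\varepsilon$; this shows that $L$ is totally bounded. Then, invoking the standard facts that the closure of a totally bounded set is totally bounded and that a closed subspace of a complete metric space is complete, the closure $\overline{L}$ is both totally bounded and complete, hence a compact subset of $X$. Finally, applying the hypothesis to $\overline{L}$ and using $L \subseteq \overline{L}$ produces exactly the $\delta > 0$ required by Definition~\ref{def:StrongContinuous} for the compact image $L$.

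The only step that needs any real care, and it is entirely standard Bishop material, is the constructive verification that $\overline{L}$ is a compact subset whenever $L$ is totally bounded and $X$ is complete. Since no more than finite $\varepsilon$-nets and limits of Cauchy sequences from $L$ are involved, no additional choice beyond what the paper already assumes is required. I do not expect a substantive obstacle: the lemma really just unpacks the definition of strong continuity once completeness of $X$ lets us pass from compact images to their compact closures.
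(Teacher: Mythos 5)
Your proof is correct and matches the intended argument: the paper precedes the lemma with exactly the hint you use, namely that a closed subset of a complete space is complete, so that the closure of a (totally bounded) compact image is a compact subset containing it. The forward direction via the inclusion map, and the converse via passing to the closure, is the standard unpacking the paper has in mind, and no further content is needed.
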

Next, we characterise a strongly continuous function on Baire space
$\PBaire$.
Note that Baire space is a complete separable metric space
with the product metric given by 
\begin{equation*}
  d(\alpha,\beta)
  \defeql \inf \left\{ 2^{-n} \mid \overline{\alpha} n =
  \overline{\beta} n\right\}.
\end{equation*}
Let $f \colon \PBaire \to X$ be a function to a metric space $(X,d)$,
and let $T$ be a fan. We say that $f$ is \emph{uniformly continuous
near $T$} if for each $\varepsilon > 0$, there exists an $N$ such that
  \begin{equation*}
    \forall \alpha \in T 
    \forall \beta 
    \left[ 
    \overline{\alpha} N = \overline{\beta} N 
  \imp d(f(\alpha), f(\beta)) < \varepsilon \right].
  \end{equation*}

It is well known that each inhabited compact subset of Baire space can be represented by the set of paths of some fan.  Then, the following is clear.
\begin{lemma}\label{lem:StrongContBaire}\leavevmode
  A function $f \colon \PBaire \to X$  to a metric space $X$ is strongly
  continuous if and only if $f$ is uniformly continuous near each fan.
\end{lemma}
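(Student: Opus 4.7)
The plan is to reduce the lemma to the preceding lemma together with the fact, just cited, that each inhabited compact subset of Baire space is of the form $K_{T}$ for some fan $T$. Since $\PBaire$ is complete, the preceding lemma tells us that strong continuity of $f \colon \PBaire \to X$ is equivalent to uniform continuity near each compact subset of $\PBaire$. So all that is really needed is a translation between the metric formulation ``uniformly continuous near $K$'' and the combinatorial formulation ``uniformly continuous near $T$'' via the correspondence $K = K_{T}$, using the fact that the product metric on $\PBaire$ is determined by coincidence of initial segments.

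For the forward direction I would take a fan $T$ and set $K \defeql K_{T}$, which is a compact subset of $\PBaire$. Given $\varepsilon > 0$, the preceding lemma supplies $\delta > 0$ such that $\alpha \in K$ and $d(\alpha,\beta) < \delta$ imply $d(f(\alpha),f(\beta)) < \varepsilon$. Choosing $N$ with $2^{-N} < \delta$, the observation that $\overline{\alpha}N = \overline{\beta}N$ forces $d(\alpha,\beta) \leq 2^{-N} < \delta$ immediately yields the combinatorial modulus $N$ witnessing uniform continuity near $T$.

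For the backward direction I would take a compact $K \subseteq \PBaire$ and, in case $K$ is inhabited, invoke the cited fact to write $K = K_{T}$ for some fan $T$. Given $\varepsilon > 0$, let $N$ be the modulus provided by uniform continuity near $T$, and set $\delta \defeql 2^{-(N+1)}$. If $\alpha \in K$ and $d(\alpha,\beta) < \delta$, then since $d(\alpha,\beta)$ is defined as the infimum of $2^{-n}$ over initial segments on which $\alpha$ and $\beta$ agree, we constructively obtain some $n$ with $\overline{\alpha}n = \overline{\beta}n$ and $2^{-n} < 2^{-N}$; in particular $n > N$, so $\overline{\alpha}N = \overline{\beta}N$, and the combinatorial modulus gives $d(f(\alpha),f(\beta)) < \varepsilon$. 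If $K$ happens not to be inhabited, the defining condition of uniform continuity near $K$ is vacuous and any $\delta$ works.

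The main (and only) obstacle I foresee is the routine constructive gymnastics around translating a strict inequality between real numbers into the existence of a common initial segment of a prescribed length; but because the metric on $\PBaire$ is defined directly as an infimum over such segments, this requires no more than the definition of the infimum. Apart from that, the proof is a direct unfolding of the two notions.
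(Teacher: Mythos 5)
Your proof is correct and takes essentially the approach the paper intends: the paper offers no argument beyond citing that every inhabited compact subset of $\PBaire$ is $K_{T}$ for some fan $T$ and declaring the rest clear, and your unfolding---reduction to the preceding lemma plus the routine translation between the metric modulus $\delta$ and the combinatorial modulus $N$ via the product metric---is exactly that omitted content. (Your case split on whether $K$ is inhabited is constructively harmless, since a totally bounded subset has a finite approximation whose emptiness is decidable.)
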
 
In particular, a function $f \colon \PBaire \to \Nat$ to the discrete
space of natural numbers is strongly continuous if and only if
for each fan $T$, there exists an $N$ such that
  \begin{equation*}
    \forall \alpha \in T 
    \forall \beta 
    \left[ 
    \overline{\alpha} N = \overline{\beta} N 
  \imp f(\alpha) = f(\beta) \right].
  \end{equation*}

\subsection{Strong continuity principle for Baire space}
We focus on the following special case of $\SC$ (cf.\
\ref{sec:Introduction} Introduction):
\begin{description}
  \item[\SCB] Every pointwise continuous function
    $f \colon \PBaire \to \Nat$ is strongly continuous.
\end{description}
For each fan $T$, we also introduce a local version:
\begin{description}
  \item[\SCB($T$)] Every pointwise continuous function
    $f \colon \PBaire \to \Nat$ is uniformly continuous near $T$.
\end{description}

\begin{proposition}\label{prop:PiFTMEquivSC}
  For each fan $T$, $\SCB(T)$ is equivalent to $\PiFTM(T)$.
\end{proposition}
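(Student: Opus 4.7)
The plan is to prove the equivalence in both directions.

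For $\PiFTM(T) \Rightarrow \SCB(T)$, given a pointwise continuous $f \colon \PBaire \to \Nat$, I will define
\[
P(a) \defeqiv \forall b \in \FSeq. f(a*b*\zero) = f(a*\zero).
\]
Each conjunct is decidable, so $P$ is $\Pi^{0}_{1}$. I will then check that $P$ is monotone (if $P(a)$ holds and $a*c \in T$, then for every $b$, $f(a*c*b*\zero) = f(a*\zero) = f(a*c*\zero)$ by two applications of $P(a)$), and that $P \cap T$ is a bar of $T$ (for $\alpha \in T$, the pointwise modulus $N$ of $f$ at $\alpha$ gives $f(\overline{\alpha}N * b * \zero) = f(\alpha) = f(\overline{\alpha}N * \zero)$ for every $b$, witnessing $P(\overline{\alpha}N)$). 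Applying $\PiFTM(T)$ yields a uniform $N_{0}$, and for any $\alpha \in T$ and $\beta$ with $\overline{\beta}N_{0} = \overline{\alpha}N_{0}$, pointwise continuity at both points combined with $P(\overline{\alpha}N_{0})$ shows $f(\alpha) = f(\overline{\alpha}N_{0} * \zero) = f(\beta)$.

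For $\SCB(T) \Rightarrow \PiFTM(T)$, by Propositions~\ref{prop:PiFTMscFT}, \ref{prop:FanMEquivFanMStar}, and~\ref{prop:PiFTMStarscFTStar}, it suffices to prove $\scFTStar(T)$. Let $P$ be a c-bar of $\FSeq$, specified by a decidable $D \subseteq \FSeq$ with $P(a) \iff \forall b. D(a*b)$. I will define
\[
f(\alpha) \defeql \mu n \left[ \forall b \in \FSeq (\lh{b} \leq n \imp D(\overline{\alpha}n * b)) \right].
\]
For each $\alpha$, any bar witness $m$ gives $f(\alpha) \leq m$ because extensions of $\overline{\alpha}m$ inherit the property that $D$ holds on every further extension. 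The defining condition at stage $n$ depends only on $\overline{\alpha}n$, so $f$ is pointwise continuous and its pointwise modulus of continuity at $\alpha$ coincides with $f(\alpha)$ itself. Applying $\SCB(T)$ gives a uniform modulus $N$ of continuity near $T$, which then forces $f(\alpha) \leq N$ for every $\alpha \in K_{T}$. Thus for every such $\alpha$ there is some $n \leq N$ with the $n$-truncated witness $\forall b (\lh{b} \leq n \imp D(\overline{\alpha}n * b))$.

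The main obstacle will be to upgrade this $n$-truncated verification into the full universal $\forall b. D(\overline{\alpha}n * b)$ demanded by $P$. The idea is to exploit uniform continuity more thoroughly: for any $\gamma \in \PBaire$ with $\overline{\gamma}N = \overline{\alpha}N$ we have $f(\gamma) = f(\alpha)$, so the defining condition for $f$ at stage $f(\alpha)$ holds for $\gamma$ as well, producing $D$-values at $\overline{\alpha}N * c * b'$ for arbitrary finite $c$ and $b'$ of appropriate length. Combining these witnesses across the finitely many length-$N$ nodes of $T$, together with the spread structure of $T$, should recover the full $\forall b$ required for the bar; I expect the key subtlety to lie in choosing the defining expression for $f$ so that its own uniform modulus directly controls the universal quantifier over $b$, perhaps by incorporating an exhaustive check over $T$-compatible extensions into the condition.
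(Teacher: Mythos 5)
The first direction ($\PiFTM(T)\Rightarrow\SCB(T)$) is correct and essentially identical to the paper's argument. The problem is the converse, and you have in fact flagged the gap yourself: the passage from the truncated condition $\forall b\,(\lh{b}\leq n \imp D(\overline{\alpha}n * b))$ to the full $\forall b\, D(\overline{\alpha}n * b)$ is not a ``subtlety'' to be smoothed over but the place where your choice of $f$ breaks down. The decidable set $D$ is \emph{not} monotone (only the c-set $P(a)\leftrightarrow\forall b\,D(a*b)$ is), so truncated witnesses do not propagate to longer extensions. Concretely, even granting $f(\alpha)=n$ for every $\gamma$ with $\overline{\gamma}N=\overline{\alpha}N$, your sketched argument only yields $D(\overline{\alpha}N * d)$ for $d$ in a bounded band of lengths (roughly $n-N\leq\lh{d}\leq 2n-N$), and nothing forces $D$ at deeper levels; iterating does not help because the value of $f$ on deeper extensions is still pinned to the same $n$. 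There is also a prior problem: since the c-bar lives on the universal spread $\FSeq$, the quantifier $\forall b\,(\lh{b}\leq n \imp \cdots)$ ranges over infinitely many $b$ (entries are arbitrary naturals), so the condition inside your $\mu$-operator is not decidable and $f$ is not constructively a well-defined function $\PBaire\to\Nat$ as written; and the claim that uniform continuity near $T$ ``forces $f(\alpha)\leq N$'' is asserted, not proved.

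The paper avoids all of this by a different choice of $f$: it sets $f(\alpha)\defeql\max\left(\left\{ n \mid \neg D(\overline{\alpha}n)\right\}\cup\{1\}\right)$, which is well defined and pointwise continuous because $P$ bars the universal spread (beyond a bar point every initial segment lies in $D$, so the set is bounded and determined by a finite prefix). With this $f$, a uniform modulus $N$ near $T$ gives $f(a*b*\zero)=f(a*\zero)$ for all $a=\overline{\alpha}M$ with $\alpha\in T$, $M\geq N$, and all $b$; choosing $M$ strictly above $\max\{f(a*\zero)\mid a\in T,\ \lh{a}=N\}$ then makes $\neg D(a*b)$ impossible, since it would force $f(a*b*\zero)\geq\lh{a*b}\geq M$. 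That is, the function is engineered so that a \emph{bound} on its values along arbitrary extensions directly yields the unrestricted $\forall b\,D(a*b)$ --- exactly the upgrade your truncated-witness function cannot deliver. To repair your proof you would need to replace your $f$ by one with this ``failure-detecting'' property (or an equivalent device); as it stands the second direction is incomplete.
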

\begin{proof} 
Fixed a fan $T$.
First, assume $\SCB(T)$. By Proposition \ref{prop:FanMEquivFanMStar}
and Proposition \ref{prop:PiFTMStarscFTStar}, it suffices to show that $\scFTStar(T)$
holds.
Let $P$ be a c-bar of $\FSeq$, and let $D \subseteq \FSeq$ be a
decidable subset such that
\[
  P(a) \leftrightarrow \forall b D(a*b).
\]
Define a function $f \colon \PBaire \to \Nat$ by
\begin{equation*}
  f(\alpha) \defeql \max D_{\alpha},
\end{equation*}
where $D_{\alpha}$ is a bounded subset of $\Nat$ given by
\[
  D_{\alpha}
  \defeql
  \left\{ n  \mid \neg D(\overline{\alpha}n) \right\}
  \cup \{1\}.
\]
Since $P$ is a c-bar of $\FSeq$, the function $f$ is pointwise
continuous. Hence $f$ is uniformly
continuous near $T$ by $\SCB(T)$. Thus, there exists an $N$ such that 
\[
  \forall \alpha \in T
  \forall \beta \,
  \left[ \overline{\alpha} N = \overline{\beta} N \imp f(\alpha) =
  f(\beta) \right].  
\]
Define $M$ by
\[
  M \defeql \max \left\{N, \max \left\{ f(a * \zero) \mid a \in T \amp
  \lh{a} = N \right\}  \right\} + 1.
\]
We show that $ \forall \alpha \in T \, P(\overline{\alpha}M)$.
Let $\alpha \in T$, and put $a \defeql \overline{\alpha}M$.  We must show
that $ \forall b \, D(a * b)$.
But for any $b \in \FSeq$, we have 
$M > f(a * \zero) = f(a * b * \zero)$, so  we must have $D(a * b)$. Hence $P$ is a uniform bar of $T$.

Conversely, assume $\PiFTM(T)$.
Let $f \colon \PBaire \to \Nat$ be a pointwise continuous function.
For each $n$, define $B_{n} \subseteq T$ by
\[
  B_{n}(a) \defeqiv f(a * \zero) = f(a * \hat{n} * \zero).
\]
Put $P \defeql \bigcap_{n \in \Nat} B_{n}$.
Clearly, $P$ is a monotone $\Pi^{0}_{1}$ subset of $T$,
and since $f$ is pointwise continuous, $P$ is a bar of $T$.
By $\PiFTM(T)$, there exists an $N$ such that
  $
   \forall \alpha \in T \, P(\overline{\alpha}N).
  $
Then
\[
   \forall \alpha \in T  \forall b \,
   \left[ 
   f(\overline{\alpha}N * \zero) = f(\overline{\alpha}N * b * \zero) \right].
\]
Let $\alpha \in T$ and $\beta \in \PBaire$ and suppose that
$\overline{\alpha}N = \overline{\beta}N$. 
Since $f$ is pointwise continuous, there exists an $m \geq N$ such
that $f(\overline{\alpha}m * \zero) = f(\alpha)$ and
$f(\overline{\beta}m * \zero) = f(\beta)$. Then,
\[
  f(\alpha)
  = f(\overline{\alpha}m * \zero)
  = f(\overline{\alpha}N * \zero)
  = f(\overline{\beta}m * \zero)
  = f(\beta).
\]
Hence, $f$ is uniformly continuous near $T$.
\end{proof}

By Corollary \ref{cor:PiFanSummary}, we have the following.
\begin{proposition}\label{prop:PiFTMequivSC}
$\SCB$ is equivalent to $\PiFTM$.
\end{proposition}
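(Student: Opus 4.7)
The plan is to reduce the global equivalence to the already-established local equivalence, then stitch the pieces together. The two relevant characterisations are (i) Proposition~\ref{prop:PiFTMEquivSC}, which gives $\SCB(T) \Leftrightarrow \PiFTM(T)$ for every fan $T$, and (ii) Corollary~\ref{cor:PiFanSummary}, which gives $\PiFTM \Leftrightarrow \forall T\,\PiFTM(T)$. The missing connective step is the observation that $\SCB$ itself is equivalent to the family $\{\SCB(T)\}_{T \text{ fan}}$.

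For this missing step I would invoke Lemma~\ref{lem:StrongContBaire}, which characterises strong continuity of a function on $\PBaire$ as uniform continuity near each fan. In the forward direction, if $\SCB$ holds and $T$ is any fan, then any pointwise continuous $f \colon \PBaire \to \Nat$ is strongly continuous and therefore uniformly continuous near $T$, giving $\SCB(T)$. In the converse direction, assuming $\SCB(T)$ for every fan $T$, a pointwise continuous $f \colon \PBaire \to \Nat$ is uniformly continuous near each fan, hence strongly continuous by Lemma~\ref{lem:StrongContBaire}, so $\SCB$ holds.

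With that equivalence in hand, the proof is a short chain:
\[
  \SCB \iff \forall T\,\SCB(T) \iff \forall T\,\PiFTM(T) \iff \PiFTM,
\]
using Lemma~\ref{lem:StrongContBaire} for the first step, Proposition~\ref{prop:PiFTMEquivSC} applied uniformly in $T$ for the second, and Corollary~\ref{cor:PiFanSummary} for the third. No new combinatorial argument is needed; there is essentially no obstacle, since the heavy lifting (the c-set construction and the construction of the pointwise continuous $f \colon \PBaire \to \Nat$ from a c-bar) has already been carried out inside Proposition~\ref{prop:PiFTMEquivSC}.
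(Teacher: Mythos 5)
Your proposal is correct and is essentially the paper's own argument: the paper proves this proposition in one line by citing Corollary~\ref{cor:PiFanSummary} together with Proposition~\ref{prop:PiFTMEquivSC}, leaving implicit the step $\SCB \iff \forall T\,\SCB(T)$ that you spell out via Lemma~\ref{lem:StrongContBaire}. Making that quantifier-exchange step explicit is a fair (and harmless) elaboration, not a different route.
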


%%%%%%%%%%%%%%%%%%%%%%%%%%%%%%%%%%%%%%%%%%%%%%%%%%%%%%%%%%%%%
\subsection{Main result}\label{sec:Main}
%%%%%%%%%%%%%%%%%%%%%%%%%%%%%%%%%%%%%%%%%%%%%%%%%%%%%%%%%%%%%
Our main aim is to show the equivalence of $\SC$ and $\PiFTM$, which 
is mediated by the following equivalence.
The main idea of its proof is due to Diener and Loeb [9,
Theorem 2], where they showed that $\PiFTM$ implies $\UCT$.
\begin{proposition}\label{prop:PiFTMEquivBtoX}
  The following statements are equivalent:
 \begin{enumerate}
   \item\label{prop:PiFTMEquivBtoX1} $\PiFTM$.
   \item\label{prop:PiFTMEquivBtoX2} 
    Every pointwise continuous function from Baire space to a metric
    space is strongly continuous.
 \end{enumerate}
\end{proposition}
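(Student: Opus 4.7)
The plan is to leverage the equivalence between $\SCB$ and $\PiFTM$ already established in Proposition \ref{prop:PiFTMequivSC}, together with the characterisation in Lemma \ref{lem:StrongContBaire}. For the easy direction (\ref{prop:PiFTMEquivBtoX2}) $\Rightarrow$ (\ref{prop:PiFTMEquivBtoX1}), one instantiates the target metric space as $\Nat$ with the discrete metric, so that (\ref{prop:PiFTMEquivBtoX2}) immediately specialises to $\SCB$, and Proposition \ref{prop:PiFTMequivSC} yields $\PiFTM$.

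For the main direction (\ref{prop:PiFTMEquivBtoX1}) $\Rightarrow$ (\ref{prop:PiFTMEquivBtoX2}), assume $\PiFTM$ and let $f \colon \PBaire \to (Y, d)$ be pointwise continuous. By Lemma \ref{lem:StrongContBaire} it suffices to show that $f$ is uniformly continuous near every fan $T$. Fix $T$ and $\varepsilon > 0$, and choose $k$ with $2^{-k} < \varepsilon/2$. Following the Diener--Loeb idea, I would define
\[
  P(a) \defeqiv \forall b_1, b_2 \in \FSeq \; d(f(a * b_1 * \zero), f(a * b_2 * \zero)) \leq 2^{-k}.
\]
Monotonicity of $P$ is immediate, since extending $a$ only restricts the variable tails. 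That $P$ is a bar of $\FSeq$ uses pointwise continuity at an arbitrary $\alpha$: pick $N$ so that $\overline{\gamma}N = \overline{\alpha}N$ implies $d(f(\gamma), f(\alpha)) < 2^{-(k+1)}$, and then the triangle inequality gives $P(\overline{\alpha}N)$.

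Since $\PiFTM$ entails $\PiFTMStar(T)$ by Corollary \ref{cor:PiFanSummary}, there is an $N$ such that for every $\alpha \in T$ some $n \leq N$ satisfies $P(\overline{\alpha}n)$; by monotonicity $P(\overline{\alpha}N)$. To conclude, given $\alpha \in T$ and $\beta \in \PBaire$ with $\overline{\alpha}N = \overline{\beta}N = a$, pointwise continuity at $\alpha$ and at $\beta$ with tolerance $\varepsilon/4$ provides an $m \geq N$ (take the larger of the two moduli) and sequences $b_1, b_2$ with $\overline{\alpha}m = a * b_1$, $\overline{\beta}m = a * b_2$, such that $d(f(\alpha), f(a * b_1 * \zero)) < \varepsilon/4$ and $d(f(\beta), f(a * b_2 * \zero)) < \varepsilon/4$. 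Combining these with $P(a)$ via the triangle inequality yields $d(f(\alpha), f(\beta)) < \varepsilon$, establishing uniform continuity near $T$.

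The delicate step will be verifying that $P$ qualifies as a $\Pi^{0}_{1}$ subset in the paper's strict sense, namely a countable intersection of decidable subsets of $\FSeq$. The observation needed is that since each $f(a * b_i * \zero)$ is a specific point of $Y$, the non-strict inequality $d(y_1, y_2) \leq 2^{-k}$ unfolds as $\forall n\, d(y_1, y_2) < 2^{-k} + 2^{-n}$, where each clause is decidable at the level of rational approximations to $d$; the outer quantification over pairs in $\FSeq \times \FSeq$ can then be folded into a single countable conjunction via a bijective coding. All remaining steps are routine triangle-inequality bookkeeping.
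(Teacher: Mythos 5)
Your proof is correct and follows essentially the same route as the paper's: the easy direction specialises to $\SCB$ and invokes Proposition \ref{prop:PiFTMequivSC}, while the hard direction builds the same Diener--Loeb-style monotone $\Pi^{0}_{1}$ bar from pairs of finite extensions, applies the fan theorem via Corollary \ref{cor:PiFanSummary}, and concludes with the triangle inequality. The only divergence is the device for making the bar $\Pi^{0}_{1}$: the paper uses Countable Choice to produce a $\Bin$-valued $\lambda$ separating $d < \varepsilon/3$ from $d > \varepsilon/6$, whereas you rewrite the non-strict bound $d \leq 2^{-k}$ as a countable intersection of rational comparisons, which likewise requires Countable Choice to fix rational approximations of the distances, since the clauses $d < 2^{-k} + 2^{-n}$ are not themselves decidable as stated.
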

\begin{proof} 
  By Proposition \ref{prop:PiFTMequivSC}, it suffices to show
  that \ref{prop:PiFTMEquivBtoX1} implies \ref{prop:PiFTMEquivBtoX2}.
 
  Assume $\PiFTM$. Let $f \colon \PBaire \to X$ be a pointwise
  continuous function to a metric space $(X,d)$, and let $T$ be a fan.
  We must show that $f$ is uniformly continuous near $T$.
  Let $\varepsilon > 0$.  Using Countable Choice, construct a function
  $\lambda \colon \FSeq \times \FSeq \to \Bin$ such that
  \begin{align*}
    \lambda(a,b) = 1 &\implies d(f(a * \zero),f(b * \zero))
    < \varepsilon/3,\\
    \lambda(a,b) = 0 &\implies d(f(a * \zero),f(b * \zero))
    > \varepsilon / 6.
  \end{align*}
  Define $P \subseteq T$ by
  \[
    P(a) \defeqiv \forall b,c \left[ \lambda(a*b,a*c) = 1  \right].
  \]
  Clearly, $P$ is a monotone $\Pi^{0}_{1}$ subset of $T$.
  To see that $P$ is a bar of $T$, let $\alpha \in T$. Since
  $f$ is pointwise continuous, there exists an $N$ such that 
  \[
    \forall a,b \left[ d(f(\overline{\alpha}N * a * \zero),
    f(\overline{\alpha}N * b * \zero)) < \varepsilon/6 \right].
  \]
  Hence $P(\overline{\alpha}N)$.
  By $\PiFTM$ and Corollary \ref{cor:PiFanSummary}, there exists an $N$ such that $\forall \alpha \in T
  P(\overline{\alpha}N)$. Thus, 
  \[
    \forall \alpha \in T
    \forall a,b \left[ d(f(\overline{\alpha}N * a * \zero),
    f(\overline{\alpha}N * b * \zero)) < \varepsilon/3  \right].
  \]
  Let $\alpha \in T$ and $\beta \in \PBaire$ and suppose that
  $\overline{\alpha}N = \overline{\beta}N$. 
  Since $f$ is pointwise continuous, there exists an $m \geq N$ such
  that $d(f(\overline{\alpha}m * \zero), f(\alpha)) <
  \varepsilon/3$ and
  $d(f(\overline{\beta}m * \zero), f(\beta)) < \varepsilon/3$. Then,
  \begin{align*}
    &d(f(\alpha), f(\beta))\\
    &\leq
    d(f(\alpha), f(\overline{\alpha}m * \zero))
    + d(f(\overline{\alpha}m * \zero), f(\overline{\beta}m * \zero))
    + d(f(\overline{\beta}m * \zero), f(\beta))\\
    &<
    \varepsilon/3 + \varepsilon/3 + \varepsilon/3 = \varepsilon.
  \end{align*}
  Therefore, $f$ is uniformly continuous near $T$.
\end{proof}

For the next proposition, we use a combination of spread representations
of a complete separable metric space and a compact metric space (cf.\
Troelstra and van Dalen \cite[Chapter 7, Section 2.3 and
  Section 4.2]{ConstMathII}). 

Let $(X,d)$ be a complete separable metric space together with a dense
sequence $(p_{n})_{n \in \Nat}$ in $X$, and let $K \subseteq X$ be an
inhabited compact subset (i.e.\ a complete and totally bounded subset).
%Since $(p_{n})_{n \in \Nat}$ is dense and $K$ is totally bounded,
For each $k$, we can find indices
$\left\{i_{0},\dots,i_{n_k} \right\} \subseteq \Nat$ such that
\begin{align}
\begin{aligned}\label{eq:2knet}
  &\forall x \in K \exists j \leq n_k \, d(x,p_{i_j}) < 2^{-k},\\
  &\forall j \leq n_k \exists x \in K  \, d(x,p_{i_j}) < 2^{-k}.
\end{aligned}
\end{align}
By Countable Choice, we have a function $I$ with the domain $\Nat$
that assigns to each $k$ a  finite indices with the property
\eqref{eq:2knet}.
By Countable Choice again, we also have a function $\alpha_{\Rat}
\colon \Nat \times \Nat \times \Nat \to \Rat$ such that
\[
  \lh{d(p_{i},p_{j}) - \alpha_{\Rat}(i,j,k)} < 2^{-k}
\]
for all $i,j, k$. Then, for each $k$ and $i$, the subset
\[
  \left\{ j \in \Nat \mid \alpha_{\Rat}(i,j,k+1) < 2^{-k+1}\right\}
\]
is decidable and inhabited by $i$. Moreover, for each $k$ and $i \in I_{k+1}$, 
there exists an $x \in K$ such that $ d(p_{i},x) < 2^{-(k+1)} $.
Then, there exists a $j \in I_{k+2}$ such that $d(x,p_j) < 2^{-(k+2)}$.
Thus
\begin{align*}
  \alpha_{\Rat}(i,j,k+1) 
  &< 2^{-(k+1)} + d(p_i,p_j)\\
  &\leq 2^{-(k+1)} + d(p_i,x) + d(x, p_j)\\
  &< 2^{-(k+1)} + 2^{-(k+1)} + 2^{-(k+2)} \\
  &< 2^{-k+1}.
\end{align*}
Hence, for each $k$ and $i \in I_{k+1}$, the subset
\[
  \left\{ j \in I_{k+2} \mid \alpha_{\Rat}(i,j,k+1) < 2^{-k+1}\right\}
\]
is finite and inhabited.

We now define a spread $T_{0}$ and a fan $T_{1}$ by specifying their
branches:
\begin{align*}
  \gamma \in T_{0}
  &\defeqiv
  \forall k \left[ \alpha_{\Rat}(\gamma_{k}, \gamma_{k+1}, k+1) <
  2^{-k+1}\right],\\
  \gamma \in T_{1}
  &\defeqiv
  \forall k \left[ \gamma_{k} \in I_{k+1} \amp
    \alpha_{\Rat}(\gamma_{k}, \gamma_{k+1}, k+1) <
  2^{-k+1}\right].
\end{align*}
Note that $T_{1}$ is a subfan of $T_{0}$.
\begin{lemma}
    \label{lem:StdRep}\leavevmode
  \begin{enumerate}
    \item\label{lem:StdRep1} For each $\gamma \in T_{0}$, $(p_{\gamma_{k}})_{k \in \Nat}$
      is a Cauchy sequence.
    \item\label{lem:StdRep2} $\forall k \left[ d(x,p_{\gamma_{k}}) < 2^{-(k+1)} \right]
      \imp \gamma \in T_{0}$ for each $x \in X$ and $\gamma$.
    \item\label{lem:StdRep3} $\forall k \left[ d(x,p_{\gamma_{k}}) < 2^{-(k+1)}  \amp
      \gamma_{k} \in I_{k+1} \right]
      \imp \gamma \in T_{1}$ for each $x \in X$ and $\gamma$.
    \item\label{lem:StdRep4} For each $\gamma \in T_{0}$, write $x_{\gamma} \defeql \lim_{k
      \to \infty} p_{\gamma_{k}}$.
      Then
      \[
        \forall x \in K \exists \gamma \in T_{1}
        \left[ x = x_{\gamma} \amp \forall k \left[ U(2^{-(k+1)},x)
        \subseteq V_{\overline{\gamma}k}\right]\right],
      \]
      where
      \begin{align*}
        U(2^{-(k+1)},x) &\defeql \left\{ y \in X \mid d(x,y) <
          2^{-(k+1)} \right\}, \\
          V_{\overline{\gamma}k} &\defeql \left\{ x_{\beta} \mid \beta \in
          T_{0} \amp \overline{\beta}k = \overline{\gamma}k \right\}.
        \end{align*}
  \end{enumerate}
\end{lemma}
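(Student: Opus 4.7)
The plan is to handle the four parts in increasing order of difficulty: parts \ref{lem:StdRep1}--\ref{lem:StdRep3} are direct consequences of the defining inequalities of $T_{0}$ and $T_{1}$ combined with the key estimate $\lvert d(p_i,p_j) - \alpha_{\Rat}(i,j,k)\rvert < 2^{-k}$ and the triangle inequality, while part \ref{lem:StdRep4} requires building a suitable path $\gamma$ by Countable Choice from the $2^{-(k+1)}$-nets $I_{k+1}$, and then a second application of Countable Choice to realise an arbitrary point of $U(2^{-(k+1)},x)$ as $x_{\beta}$ for some $\beta \in T_{0}$ sharing the prescribed initial segment with $\gamma$.

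For part \ref{lem:StdRep1}, if $\gamma \in T_{0}$ then $d(p_{\gamma_{k}},p_{\gamma_{k+1}}) < \alpha_{\Rat}(\gamma_{k},\gamma_{k+1},k+1) + 2^{-(k+1)} < 2^{-k+1} + 2^{-(k+1)} < 2^{-k+2}$, and a geometric-tail estimate gives Cauchyness. For parts \ref{lem:StdRep2} and \ref{lem:StdRep3}, assuming $d(x,p_{\gamma_{k}}) < 2^{-(k+1)}$ for all $k$, the triangle inequality yields $d(p_{\gamma_{k}},p_{\gamma_{k+1}}) < 2^{-(k+1)} + 2^{-(k+2)} < 2^{-k}$, whence $\alpha_{\Rat}(\gamma_{k},\gamma_{k+1},k+1) < 2^{-k} + 2^{-(k+1)} < 2^{-k+1}$, verifying $\gamma \in T_{0}$; the extra hypothesis $\gamma_{k} \in I_{k+1}$ then gives $\gamma \in T_{1}$.

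For part \ref{lem:StdRep4}, given $x \in K$, the defining property of $I_{k+1}$ supplies for each $k$ some index in $I_{k+1}$ within distance $2^{-(k+1)}$ of $x$, so Countable Choice produces $\gamma$ with $\gamma_{k} \in I_{k+1}$ and $d(x,p_{\gamma_{k}}) < 2^{-(k+1)}$. By part \ref{lem:StdRep3}, $\gamma \in T_{1}$, and $d(x,p_{\gamma_{k}}) \to 0$ gives $x_{\gamma} = x$. For the inclusion $U(2^{-(k+1)},x) \subseteq V_{\overline{\gamma}k}$, fix $y$ with $d(x,y) < 2^{-(k+1)}$, set $\beta_{j} \defeql \gamma_{j}$ for $j < k$, and use Countable Choice to pick $\beta_{j}$ for $j \geq k$ satisfying $d(y,p_{\beta_{j}}) < 2^{-(j+1)}$. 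The $\alpha_{\Rat}$-condition holds for $j < k-1$ because $\gamma \in T_{0}$ and for $j \geq k$ by the tail argument of part \ref{lem:StdRep2} applied to the sequence converging to $y$.

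The step I expect to be the main bookkeeping obstacle is the bridging index $j = k-1$, where the two regimes meet. Here I would estimate
\[
  d(p_{\gamma_{k-1}},p_{\beta_{k}}) \leq d(p_{\gamma_{k-1}},x) + d(x,y) + d(y,p_{\beta_{k}}) < 2^{-k} + 2^{-(k+1)} + 2^{-(k+1)} = 2^{-k+1},
\]
so $\alpha_{\Rat}(\beta_{k-1},\beta_{k},k) < 2^{-k+1} + 2^{-k} = 3\cdot 2^{-k} < 2^{-k+2}$, which is precisely the bound $2^{-(k-1)+1}$ required for $\beta \in T_{0}$ at index $j = k-1$ (the case $k = 0$ being vacuous). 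Hence $\beta \in T_{0}$ with $\overline{\beta}k = \overline{\gamma}k$ and $x_{\beta} = \lim_{j} p_{\beta_{j}} = y$, so $y \in V_{\overline{\gamma}k}$, completing the proof.
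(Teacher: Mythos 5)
Your proof is correct and follows essentially the same route as the paper: choose $\gamma$ by Countable Choice with $\gamma_{k} \in I_{k+1}$ and $d(x,p_{\gamma_{k}}) < 2^{-(k+1)}$, invoke part \ref{lem:StdRep3}, and realise each $y$ near $x$ as $x_{\beta}$ for a sequence obtained by splicing an initial segment of $\gamma$ with an approximating sequence for $y$, checking a single bridging inequality via the triangle inequality. The only cosmetic difference is that the paper splices after $k+1$ entries (so the bridge is $\alpha_{\Rat}(\gamma_{k},\beta_{k+1},k+1) < 2^{-k+1}$) while you splice after $k$ entries and check the bound at $j=k-1$; both estimates go through and both give $\overline{\beta}k = \overline{\gamma}k$.
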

\begin{proof}
We closely follow the proof by Troelstra and van Dalen
\cite[Chapter 7, Proposition 2.4]{ConstMathII}.
\ref{lem:StdRep1} -- \ref{lem:StdRep3} are
straightforward. 
As for \ref{lem:StdRep4},
let $x \in K$. By Countable Choice, we can find a sequence $\gamma$ such that
$ \forall k \left[ d(x,p_{\gamma_{k}}) < 2^{-(k+1)} \amp
\gamma_{k} \in I_{k+1}\right]$.
Then $\gamma \in T_{1}$ by \ref{lem:StdRep3}, and $x = \lim_{k \to
\infty} p_{\gamma_{k}} = x_{\gamma}$.
Now fix $k$ and $y \in U(2^{-(k+1)},x)$. By Countable Choice,
construct a sequence $\beta$ such that
$ \forall n \left[ d(y,p_{\beta_{n}}) < 2^{-(n+1)} \right]$.
Then $\beta \in T_{0}$ by \ref{lem:StdRep2} and $y = x_{\beta}$.
Since
\begin{align*}
  d(p_{\gamma_{k}},p_{\beta_{k+1}})
  &\leq d(p_{\gamma_{k}}, x) + d(x,y) + d(y, p_{\beta_{k+1}})\\
  &< 2^{-(k+1)} + 2^{-(k+1)} + 2^{-(k+1)} 
  = 3 \cdot 2^{-(k+1)},
\end{align*}
we have $\alpha_{\Rat}(\gamma_{k},\beta_{k+1}, k+1) < 3 \cdot
2^{-(k+1)} + 2^{-(k+1)} = 2^{-k+1}$. This implies that the sequence $\delta =
\overline{\gamma}(k+1) * \lambda n. \beta(k+1+n)$ is in $T_{0}$.
Since $y = x_{\delta}$, we see that $y \in V_{\overline{\gamma}k}$.
Therefore $U(2^{-(k+1)},x) \subseteq V_{\overline{\gamma}k}$.
\end{proof}
Let $\varphi_{T_{0}} \colon K_{T_{0}} \to X$ be the mapping $\gamma
\mapsto x_{\gamma}$. Note that $\varphi_{T_{0}}$ is uniformly continuous.
We also have a mapping $\Gamma_{T_{0}} \colon \FSeq \to
T_{0}$ defined by \eqref{eq:Gamma}.
Then, $\Gamma_{T_0}$ can be naturally extended to a uniformly continuous
function $\Gamma^{*}_{T_0} \colon \PBaire \to K_{T_{0}}$ that is identity on
$T_{0}$ (cf.\ \eqref{eq:GammaExt}).
Write $\Phi_{T_{0},X} \colon \PBaire \to X$ for the composition
$\varphi_{T_{0}} \circ \Gamma^{*}_{T_{0}}$.

\begin{proposition}\label{prop:BtoXEquivCSMtoX}
  The following statements are equivalent:
 \begin{enumerate}
   \item\label{prop:BtoXEquivCSMtoX1}
    Every pointwise continuous function from Baire space to a metric
    space is strongly continuous.
   \item\label{prop:BtoXEquivCSMtoX2} 
     \textup{(\SC)} Every pointwise continuous function from a
     complete separable metric space to a metric space is strongly
     continuous.
 \end{enumerate}
\end{proposition}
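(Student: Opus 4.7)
The direction \ref{prop:BtoXEquivCSMtoX2} $\Rightarrow$ \ref{prop:BtoXEquivCSMtoX1} is immediate, since Baire space with the product metric is itself a complete separable metric space. The substantive direction is \ref{prop:BtoXEquivCSMtoX1} $\Rightarrow$ \ref{prop:BtoXEquivCSMtoX2}, and the plan is to reduce an arbitrary complete separable metric space $X$ to Baire space through the spread representation $T_{0}$, $T_{1}$ and the map $\Phi_{T_{0},X}$ set up before Lemma~\ref{lem:StdRep}.

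Assume \ref{prop:BtoXEquivCSMtoX1}, and let $f \colon X \to Y$ be pointwise continuous, with $X$ a complete separable metric space. By the characterisation at the start of Section~\ref{sec:StrongContinuity}, it suffices to fix an inhabited compact subset $K \subseteq X$ and show that $f$ is uniformly continuous near $K$. Set up the data $(T_{0}, T_{1}, \varphi_{T_{0}}, \Gamma^{*}_{T_{0}}, \Phi_{T_{0},X})$ associated to $X$ and $K$ as in the discussion preceding Lemma~\ref{lem:StdRep}. Since $\Gamma^{*}_{T_{0}}$ and $\varphi_{T_{0}}$ are uniformly continuous, the composition $\Phi_{T_{0},X} \colon \PBaire \to X$ is uniformly continuous; hence the composite $g \defeql f \circ \Phi_{T_{0},X} \colon \PBaire \to Y$ is pointwise continuous. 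By hypothesis \ref{prop:BtoXEquivCSMtoX1}, $g$ is strongly continuous, so by Lemma~\ref{lem:StrongContBaire} it is uniformly continuous near the fan $T_{1}$.

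Given $\varepsilon > 0$, apply the latter to obtain an $N$ such that for all $\gamma \in T_{1}$ and $\beta \in \PBaire$,
\[
  \overline{\gamma}N = \overline{\beta}N \imp d_{Y}(g(\gamma), g(\beta)) < \varepsilon.
\]
Set $\delta \defeql 2^{-(N+1)}$. For $x \in K$ and $u \in X$ with $d_{X}(x,u) < \delta$, invoke Lemma~\ref{lem:StdRep}\ref{lem:StdRep4} to produce $\gamma \in T_{1}$ with $x_{\gamma} = x$ and $U(2^{-(N+1)}, x) \subseteq V_{\overline{\gamma}N}$. Then $u \in V_{\overline{\gamma}N}$, so there is some $\beta \in T_{0}$ with $\overline{\beta}N = \overline{\gamma}N$ and $x_{\beta} = u$. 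Because $\beta, \gamma \in T_{0}$, we have $\Gamma^{*}_{T_{0}}(\beta) = \beta$ and $\Gamma^{*}_{T_{0}}(\gamma) = \gamma$, whence $g(\gamma) = f(x)$ and $g(\beta) = f(u)$. The modulus clause then yields $d_{Y}(f(x), f(u)) < \varepsilon$, which is exactly uniform continuity of $f$ near $K$.

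The proof is really a piece of bookkeeping: the genuine content lies in the earlier equivalence of Proposition~\ref{prop:PiFTMEquivBtoX} (which via Proposition~\ref{prop:PiFTMequivSC} lets us lift continuity principles to arbitrary metric-space codomains), and the standard spread representation supplied by Lemma~\ref{lem:StdRep}. The only point that requires care is the identification $\Phi_{T_{0},X}(\gamma) = x_{\gamma}$ for $\gamma \in T_{0}$, and the resulting matching of the modulus $N$ provided by uniform continuity near $T_{1}$ with the radius $2^{-(N+1)}$ delivered by clause~\ref{lem:StdRep4} of Lemma~\ref{lem:StdRep}; once that is in place, no further appeal to $\PiFTM$ is needed.
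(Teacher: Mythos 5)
Your proof is correct and follows essentially the same route as the paper: compose $f$ with the uniformly continuous $\Phi_{T_{0},X}$, apply the hypothesis to get uniform continuity of $g$ near the fan $T_{1}$, and transfer the modulus back via Lemma~\ref{lem:StdRep}\ref{lem:StdRep4} with radius $2^{-(N+1)}$, using that $\Gamma^{*}_{T_{0}}$ is the identity on $T_{0}$. The only inaccuracy is the closing aside: this proposition makes no use of Proposition~\ref{prop:PiFTMEquivBtoX} or of \PiFTM{} at all (those enter only when the results are combined in Theorem~\ref{thm:EquivPi01FanM}), but that remark does not affect the argument.
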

\begin{proof}
  It suffices to show that
  \ref{prop:BtoXEquivCSMtoX1} implies \ref{prop:BtoXEquivCSMtoX2}.
  Assume \ref{prop:BtoXEquivCSMtoX1}. Let $f \colon X \to Y$ be a
  pointwise continuous function from a complete separable metric space
  $X$ to a metric space $Y$, and let $K \subseteq X$ be an inhabited compact
  subset. We must show that $f$ is uniformly continuous near $K$.

  Construct a spread $T_{0}$ and a subfan $T_{1}$ of $T_{0}$ with
  the property described in Lemma \ref{lem:StdRep}.
  Let $\Phi_{T_{0},X} \colon \PBaire \to X$ be the uniformly
  continuous function that is introduced just above this proposition.
  Then, $g \defeql f \circ \Phi_{T_{0},X}$ is pointwise continuous,
  and thus strongly continuous by the assumption. Now fix
  $\varepsilon > 0$. Since $g$ is uniformly continuous near
  $T_{1}$, there exists an $N$ such that
  \[
    \forall \alpha \in T_{1} \forall \beta
    \left[ \overline{\alpha}N = \overline{\beta}N \imp
    d_{Y}(g(\alpha), g(\beta)) < \varepsilon \right].
  \]
  Let $x \in K$ and $u \in X$ such that $d_{X}(x,u) < 2^{-(N+1)}$.
  By item \ref{lem:StdRep4} of Lemma \ref{lem:StdRep}, there exists
  an $\alpha \in T_{1}$ and a $\beta \in T_{0}$ such that
  $\overline{\alpha}N = \overline{\beta}N$,
  $x = \Phi_{T_{0},X}(\alpha)$, and $u = \Phi_{T_{0},X}(\beta)$.
  Then $d_{Y}(f(x),f(u)) = d_{Y}(g(\alpha),g(\beta)) < \varepsilon$.
  Therefore $f$ is uniformly continuous near $K$. 
\end{proof}
We conclude with a summary of our main results.
\begin{theorem}\label{thm:EquivPi01FanM}
  The following statements are equivalent:
 \begin{enumerate}
   \item $\PiFTM$.
   \item $\scFT$.
   \item
   \textup{(\SCB)} Every pointwise continuous function from Baire space to $\Nat$
    is strongly continuous.
   \item
     \textup{(\SC)} Every pointwise continuous function from a
     complete separable metric space to a metric space is strongly
     continuous.
 \end{enumerate}
\end{theorem}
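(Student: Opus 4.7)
The plan is essentially bookkeeping: each of the four statements has already been tied to $\PiFTM$ by a separate proposition in the preceding sections, and Theorem \ref{thm:EquivPi01FanM} functions as a summary. I would organise the proof as a hub-and-spoke argument with $\PiFTM$ at the centre, establishing three biconditionals (1)$\iff$(2), (1)$\iff$(3), and (1)$\iff$(4).

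First, the equivalence (1)$\iff$(2), i.e.\ $\PiFTM \iff \scFT$, is precisely the equivalence of items 1 and 2 of Proposition \ref{prop:PiFMEquivscFT}, which in turn was obtained by combining Proposition \ref{prop:PiFTMscFT} (in its $T = \BSeq$ instance) with the chain of equivalences established through Corollary \ref{cor:PiFanSummary}. Next, the equivalence (1)$\iff$(3), i.e.\ $\PiFTM \iff \SCB$, is exactly Proposition \ref{prop:PiFTMequivSC}, which itself was derived by applying Proposition \ref{prop:PiFTMEquivSC} at each fan $T$ and invoking Corollary \ref{cor:PiFanSummary}. Finally, the equivalence (1)$\iff$(4), i.e.\ $\PiFTM \iff \SC$, is obtained by concatenating Proposition \ref{prop:PiFTMEquivBtoX} (which identifies $\PiFTM$ with the intermediate Baire-to-metric-space strong continuity statement) and Proposition \ref{prop:BtoXEquivCSMtoX} (which lifts that intermediate statement to the full principle $\SC$).

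There is no substantive obstacle at this stage: all technical content -- the translation between monotone $\Pi^{0}_{1}$ bars and c-bars, the reduction of pointwise continuity to monotone $\Pi^{0}_{1}$ bars via the auxiliary set $B_{n}$ or the $\lambda$ function, and the reduction of complete separable metric spaces to Baire space through the $\Phi_{T_{0},X}$ construction -- has been carried out in the cited propositions. The only minor issue worth noting in the write-up is merely to make the citations transparent, e.g.\ by stating explicitly that the three equivalences (1)$\iff$(2), (1)$\iff$(3) and (1)$\iff$(4) together generate the full cycle among the four items, so that the theorem indeed asserts a single equivalence class. I would therefore keep the proof to a single short paragraph citing Propositions \ref{prop:PiFMEquivscFT}, \ref{prop:PiFTMequivSC}, \ref{prop:PiFTMEquivBtoX}, and \ref{prop:BtoXEquivCSMtoX}, without reworking any of the underlying constructions.
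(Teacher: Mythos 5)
Your proposal is correct and matches the paper exactly: Theorem \ref{thm:EquivPi01FanM} is stated there as a summary of the preceding results, with the equivalences (1)$\iff$(2), (1)$\iff$(3), and (1)$\iff$(4) supplied by Propositions \ref{prop:PiFMEquivscFT}, \ref{prop:PiFTMequivSC}, and the combination of Propositions \ref{prop:PiFTMEquivBtoX} and \ref{prop:BtoXEquivCSMtoX}, respectively. No further argument is needed.
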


\section*{Acknowledgements}
I am grateful to Josef Berger and Makoto Fujiwara
for useful discussions.
I also thank Hajime Ishihara and Takako Nemoto for helpful comments.
This work was carried out while the author was  INdAM-COFUND-2012 fellow of
Istituto Nazionale di Alta Matematica ``F.\ Severi''(INdAM).

%\bibliographystyle{myabbrvnat}
%\bibliographystyle{plain}
%\bibliography{$HOME/refs}

\end{document}